\tikzset{
  on each segment/.style={
    decorate,
    decoration={
      show path construction,
      moveto code={},
      lineto code={
        \path [#1]
        (\tikzinputsegmentfirst) -- (\tikzinputsegmentlast);
      },
      curveto code={
        \path [#1] (\tikzinputsegmentfirst)
        .. controls
        (\tikzinputsegmentsupporta) and (\tikzinputsegmentsupportb)
        ..
        (\tikzinputsegmentlast);
      },
      closepath code={
        \path [#1]
        (\tikzinputsegmentfirst) -- (\tikzinputsegmentlast);
      },
    },
  },
  mid arrow/.style={postaction={decorate,decoration={
        markings,
        mark=at position .5 with {\arrow[#1]{stealth}}
      }}},
}
\newtheorem{Thm}{Theorem}[section]
\newtheorem{Res*}{}
\newtheorem{Def}{Definition}[section]
\newtheorem{Lemma}{Lemma}[section]
\newtheorem{Prop}{Proposition}[section]
\newtheorem{Rem}{Remark}[section]
\newtheorem{exam}[]{Example}
\providecommand{\keywords}[1]
{
  \small	
  \textbf{\textit{Keywords---}} #1
}
\title{Pointed Gromov-Hausdorff Topological Stability for non-compact metric spaces}
\author[1]{Henry Mauricio S\'anchez Sanabria}
\author[2]{Luis Eduardo Osorio Acevedo}
\affil[1]{Departamento de Matem\'aticas\\ Universidad Central\\
Cra 5  21-38, Bogot\'a Colombia\\
hsanchezs1@ucentral.edu.co}
\affil[2]{Departamento de Matem\'aticas y Estad\'istica\\ Universidad Nacional de Colombia\\Cra 27  64-60, 170003, Manizales, Colombia\\ leosorioa@unal.edu.co}
\date{}
\begin{document}
\maketitle

\abstract{We combine the pointed Gromov-Hausdorff metric \cite{rong10} with the locally $C^0$ distance to obtain the pointed $C^0$-Gromov-Hausdorff distance between maps of possibly different non-compact pointed metric spaces. The latter is then combined with Walters's locally topological stability \cite{lee18} and $GH-$stability from \cite{AM} to obtain the notion of  topologically $GH$-stable pointed homeomorphism. We give one example to show the difference between the distance when take different base point in a pointed metric space.}

\keywords{pointed Gromov-Hausdorff metric, pointed $C^0$-Gromov-Hausdorff distance, locally topological stability.
}

\footnote{The second author was supported by Minciencias Colombia grant number 80740-738-2019
and Universidad Nacional de Colombia sede Manizales}

\section{Introduction}
One of the interests in Dynamical Systems theory consists in determine conditions to guarantee the stability of a system, i.e., preservation of the dynamical structure by small perturbations. 
In this way, Walters \cite{walters70} proposed a notion of stability where the behavior of the dynamics $C^0$-close to it is the same as its own dynamic. The notion of topological stability to maps given by Walters, was used by this author to prove that every Anosov diffeomorphism on compact manifolds is topologically stable.
In 2017, Arbieto and Morales \cite{AM} presented an understanding of classical Gromov-Hausdorff metrics extending it to maps between different metric spaces, and started an analysis of asymptotic behavior through the concept of usual topological stability. Then Arbieto and Morales combine the resulting distance $C^0$-Gromov-Hausdorff called with the Walters topological stability to obtain the notion of topological GH-stability. Another approach to this situation was made by other authors, we recommend read Chung \cite{chung20}, Khan-Das-Das \cite{khan18}, Dong-Lee-Morales \cite{dong21}, and references there in, where describe how Gromov-Hausdorff distances was extended to be a distance between maps. 

Since the theory of dynamical systems deal with dynamical properties of group actions, and extension of the distance $C^0$-GH 
to group actions was developed in \cite{dong21}, and several results can be found in  \cite{khan18}. On the other hand, \cite{chulluncuy21} gives a definition of $C^0$-GH distance, that measures distances between two continuous flows of possibly different metric spaces. An important fact in these results was the compactness of the spaces.

Therefore, comes the natural question about how to develop a theory for a non-compact case.  In this way, the result obtained by Walters for Anosov diffeomorphisms is extended for  first countable, locally compact, paracompact, Hausdorff spaces in \cite{das13}, and later \cite{lee18} extend the Walters's stability theorem  to homeomorphisms on locally compact metric spaces.

The purpose behind this work is following to Arbieto and Morales, to provide an understanding of the classical pointed Gromov-Hausdorff distance between non-compact metric spaces (see \cite[p.209]{rong10}) by extending it to maps between proper metric spaces. We call this distance 
the pointed $C^0$-Gromov-Hausdorff distance, and later we combine the resulting distance with the topological stability of locally compact metric spaces \cite{lee18} to obtain the notion of topological pointed GH-stability. The reader will also compared this approach with the definition of pointed Gromov-Hausdorff distance between the two pointed triples and Theorem 3.3 of \cite{rong12}.

The paper is organized as follows:

In Section 2, we state the geometrical and dynamical preliminaries. 
In Section 3, we introduce the pointed $C^0$-Gromov-Hausdorff and we prove some geometrical properties. In Section 4, we establish a definition of topological stability in a pointed sense, we mention our main results and provide some examples and figures that illustrate the concepts.  

\subsection{Acknowledgements}

We thank to Carlos Morales and Seraf{\'i}n Bautista for always be closed to help. The second author is indebted to Stefano Nardulli for all the excellent advice and intuitive explanations of the geometry of non-compact metric spaces, and the encouragement to work with them.
The second author would also to give thanks to Minciencias Colombia for the postdoctoral project 80740-738-2019, and to Universidad Nacional de Colombia sede Manizales its hospitality.


\section{Preliminaries}
\subsection{Gromov-Hausdorff Distance}

Let $(Z,d^Z)$ be a metric space. Let $A\subset Z$, for $\varepsilon>0$, the $\varepsilon$-tube of $A$ is the open set define by $$N_\varepsilon(A)=\left\{z\in Z: d^Z(z,A)<\varepsilon\right\}=\bigcup_{a \in A} B(a, \varepsilon).$$

\begin{Def}[Hausdorff Distance]
 Let $(Z,d)$ be a metric space. Let $A,B$ two bounded subsets of $Z$, for $\varepsilon>0$, 
define  the Hausdorff distance between $A$ and $B$ to be
\begin{align*}
 d_H^Z(A,B)=&\max\left\{ \sup_{a\in A}\inf_{b\in B}d(a,b),
\sup_{b\in B}\inf_{a\in A}d(a,b)
 \right\}\\
 =&\max\left\{ \sup_{a\in A}d(a,B), \sup_{b\in B}d(A,b)\right\} \notag\\
 =& \inf\left\{ \varepsilon>0: A\subset N_\varepsilon(B), B\subset N_\varepsilon(A)  \right\}
\end{align*}
\end{Def}

The Gromov-Hausdorff distance via intrinsic Hausdorff distance is given as:   
\begin{Def}[Intrinsic Gromov-Hausdorff distance] 
Let $X$ and $Y$ be metric
spaces of finite diameter. The Intrinsic Gromov-Hausdorff distance (briefly, $\hat{GH}$-distance)
of $X$ and $Y$ is
$$
\begin{aligned} \hat d_{G H}(X, Y)= \inf _{Z}&\left\{d_{H}^{Z}(\phi(X), \psi(Y)) :\right.\\ 
&\text { $Z$ isometric embedding } \phi : X \hookrightarrow Z, \psi : Y \hookrightarrow Z\} 
\end{aligned}
$$
where $Z$ runs over all such metric spaces, and for each  $Z, \phi$ and $\psi$  run over all possible isometric embedding.
\end{Def}

\begin{Def}
Two maps $\varphi, \psi: X \rightarrow Y$ are $\alpha$-close if and only if  $\forall x \in X$
$$
d^Y(\varphi(x), \psi(x)) \leq \alpha.
$$ 
\end{Def}
 
The following  can be founded by Definition 7.1.4. of \cite{buragoburago}.
\begin{Def}
 Let $(X,d^X)$ and $(Y,d^Y)$ be metric spaces and $i : X \rightarrow Y$ an
arbitrary map. The distortion of $i$ is defined by
$$
\operatorname{dis}(i)= \sup _{x_{1}, x_{2} \in X}\left|d^{Y}(i(x_{1}),i(x_{2}))-d^{X}(x_{1}, x_{2})\right|.
$$
When we required restrict for $U\subset X$, we write
$$
\operatorname{dis}(i)|_U= \sup _{x_{1}, x_{2} \in U}\left|d^{Y}(i(x_{1}),i(x_{2}))-d^{X}(x_{1}, x_{2})\right|.
$$
\end{Def}

For compact metric spaces, $X, Y,$ a map $i: X \rightarrow Y$ is called an $\varepsilon$-GH approximation (briefly,  $\varepsilon$-GHA), if $i$ satisfies the following two conditions:
\begin{enumerate}
\item (it almost preserves distances): $\operatorname{dis}(i)<\varepsilon$.
 \item (it is almost surjective): $N_{\varepsilon}(i(X))=Y$.\\ 
 (i.e, $\forall y\in Y, \exists x \in X; d^Y(i(x), y) \leq \varepsilon$). In particular, $d^Y_{H}(i(X), Y) \leq \varepsilon$.
\end{enumerate}

The set of $\varepsilon$-GHA between $X$ and $Y$ we call $\operatorname{App}_\varepsilon(X,Y)$. We emphasize that an $i\in \operatorname{App}_\varepsilon(X,Y)$ is not necessarily continuous, or injective or surjective.

\begin{Def}
 For compact metric spaces $X$ and $Y$, the following number,
$$
d_{G H}(X, Y)=\inf \{\varepsilon>0 : \exists\ i\in \operatorname{App}_\varepsilon(X,Y) \text { and } j\in\operatorname{App}_\varepsilon(Y,X)\}
$$
is called the Gromov-Hausdorff distance between $X$ and $Y.$
\end{Def}

For preserve the symmetry between $X$ and $Y$ it is necessarily $i$ and $j$, 
but this is not serious, because any $i\in\operatorname{App}_\varepsilon(X,Y)$, exist a approximate inverse, that is, exist $i'\in\operatorname{App}_{4\varepsilon}(Y,X)$
$$
d^{X}\left(i' \circ i(x), x\right) \leq 3 \varepsilon,\quad
d^{Y}\left(i \circ i'(y), y\right) \leq 3 \varepsilon,
$$
Such a map $i^{\prime}$ will be called an $\varepsilon$-inverse of $i$.

A proof of this fact can be founded in \cite[Proposition 1.4]{jansen17} or see \cite{villani08}.

The proof of the followings lemmas  can be found in \cite{rong10} (Lemma 1.3.3 and 1.3.4)
\begin{Lemma}
 Let $X$ and $Y$ be compact metric spaces. If exist $i\in  \operatorname{App}_\varepsilon(X,Y)$, then 
 $d_{G H} \leq 2\hat{d}_{G H} \leq 3 d_{G H}$.
\end{Lemma}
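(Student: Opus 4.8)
The plan is to prove the two inequalities separately, in each case by explicitly converting between the two kinds of data defining the two distances: an isometric embedding of $X$ and $Y$ into a common metric space $Z$ on one side, and a pair of $\varepsilon$-GH approximations on the other.

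For $d_{GH}(X,Y)\le 2\hat d_{GH}(X,Y)$, I would fix any $\varepsilon>\hat d_{GH}(X,Y)$ and choose a metric space $Z$ with isometric embeddings $\phi\colon X\hookrightarrow Z$, $\psi\colon Y\hookrightarrow Z$ such that $d_H^Z(\phi(X),\psi(Y))<\varepsilon$, identifying $X$ and $Y$ with their images. Define $i\colon X\to Y$ by letting $i(x)$ be a point of $Y$ with $d^Z(x,i(x))<\varepsilon$ (one exists since $X\subset N_\varepsilon(Y)$). Then for $x_1,x_2\in X$ the triangle inequality in $Z$ gives $|d^Y(i(x_1),i(x_2))-d^X(x_1,x_2)|\le d^Z(x_1,i(x_1))+d^Z(x_2,i(x_2))<2\varepsilon$, so $\operatorname{dis}(i)\le 2\varepsilon$; and for $y\in Y$ there is $x\in X$ with $d^Z(x,y)<\varepsilon$, whence $d^Z(i(x),y)\le d^Z(i(x),x)+d^Z(x,y)<2\varepsilon$, so $N_{2\varepsilon}(i(X))=Y$. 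Thus $i\in\operatorname{App}_{2\varepsilon}(X,Y)$, and symmetrically $j\in\operatorname{App}_{2\varepsilon}(Y,X)$, giving $d_{GH}(X,Y)\le 2\varepsilon$; letting $\varepsilon\downarrow\hat d_{GH}(X,Y)$ yields the claim.

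For $2\hat d_{GH}(X,Y)\le 3\,d_{GH}(X,Y)$, I would fix $\varepsilon>d_{GH}(X,Y)$ and take $i\in\operatorname{App}_\varepsilon(X,Y)$ (as supplied by the hypothesis). Glue $X$ to $Y$ along $i$: on $W=X\sqcup Y$ let $\rho$ restrict to $d^X$ on $X$ and to $d^Y$ on $Y$, and for $x\in X$, $y\in Y$ set $\rho(x,y)=\rho(y,x)=\inf_{x'\in X}\bigl(d^X(x,x')+d^Y(i(x'),y)\bigr)+\tfrac{\varepsilon}{2}$. One checks $\rho$ is a metric on $W$: non-degeneracy is immediate because mixed distances are at least $\varepsilon/2>0$; and the triangle inequality splits into the cases with at most two of the three points in a single factor, the only nontrivial ones using $\operatorname{dis}(i)\le\varepsilon$ to pass between $d^X(x',x'')$ and $d^Y(i(x'),i(x''))$, with the two additive $\varepsilon/2$ terms absorbing exactly that distortion. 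With $X,Y\subset W$ isometrically, one has $\rho(x,i(x))\le\varepsilon/2$ for every $x\in X$ (take $x'=x$), while, since $i$ is $\varepsilon$-onto, for every $y\in Y$ there is $x\in X$ with $\rho(x,y)\le d^Y(i(x),y)+\varepsilon/2\le 3\varepsilon/2$; hence $d_H^W(X,Y)\le 3\varepsilon/2$ and so $\hat d_{GH}(X,Y)\le 3\varepsilon/2$. Letting $\varepsilon\downarrow d_{GH}(X,Y)$ gives $2\hat d_{GH}(X,Y)\le 3\,d_{GH}(X,Y)$.

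The routine part is the case analysis for the triangle inequality of $\rho$; the point to watch is the bookkeeping of constants, so that the distortion of $i$ is exactly absorbed by the additive $\varepsilon/2$ terms and the final Hausdorff bound comes out as $3\varepsilon/2$ rather than $2\varepsilon$. The only genuine subtlety is the passage from the $\varepsilon$-level statements to the stated inequalities between the distances, which is handled by taking infima over the admissible $\varepsilon$; compactness of $X$ and $Y$ guarantees $d_{GH}(X,Y)$ and $\hat d_{GH}(X,Y)$ are finite, so these infima are over nonempty sets.
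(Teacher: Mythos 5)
Your argument is correct and is essentially the standard proof of this lemma, which the paper itself does not reproduce but defers to Rong (Lemma 1.3.3/1.3.4): build $2\varepsilon$-approximations from a Hausdorff-close common embedding for $d_{GH}\le 2\hat d_{GH}$, and glue $X\sqcup Y$ along an $\varepsilon$-GHA with the additive $\varepsilon/2$ to get $\hat d_{GH}\le \tfrac{3}{2}\varepsilon$. The only cosmetic point is strictness: your bound $\operatorname{dis}(i)\le 2\varepsilon$ need not be strict as the definition of $\operatorname{App}_{2\varepsilon}$ demands, but this is harmless since you pass to the limit $\varepsilon\downarrow \hat d_{GH}$ (or replace $2\varepsilon$ by $2\varepsilon+\delta$) at the end.
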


So, we get an estimate for $d_{G H}$ from $\hat{d}_{G H}$, but are different.


\subsection{Pointed Gromov-Hausdorff distance}

For non-compact spaces, the Gromov-Hausdorff distance is not very useful, instead is more convenient work with the concept of  pointed Gromov-Hausdorff distance between pointed metric spaces. We suggest to the reader to see \cite{buragoburago,rong10, jansen17, herron16} and references therein

\begin{Def}
Let $(X, x)$ and $(Y, y)$ be pointed metric spaces, and  let $\varepsilon >0$. 
If there exists  $i:X \to Y$
such that
\begin{enumerate}
\item $i(x)=y$.
 \item $\operatorname{dis}(i)\big|_{B^X(x,1/\varepsilon)}<\varepsilon$.
 \item $B^Y(y,1/\varepsilon-\varepsilon)\subset N_\varepsilon(i(B^X(x,1/\varepsilon)))
 =\bigcup_{y \in i\left(B^X(x,1/\varepsilon)\right)} B^{Y}(y, \varepsilon)
 $.
 \end{enumerate}
For $x \in X$ and $y \in Y$, we call an $\varepsilon$-pointed Gromov-Hausdorff approximation to a function satisfying 1., 2. and 3., and the set of all $\varepsilon$-pointed Gromov-Hausdorff approximations is denoted by
$$
\operatorname{Appr}_{\varepsilon}((X, x),(Y, y)).
$$
\end{Def}
It follows from conditions 1. and 2. that
\begin{equation}
\label{eqball}
i \left(B^X(x, r)\right) \subset B^Y(y, r+\varepsilon)\quad \text{for } 0<\varepsilon<r<\frac1\varepsilon.  \end{equation}

So, (\ref{eqball}) and 3. together give in turn that
$$
B^Y(y, r-\varepsilon) \subset N_{\varepsilon}(i(B^X(x, r))) \subset B^Y(y, r+2 \varepsilon).
$$

\begin{Def}
The pointed GH-distance between $(X,x)$ and $(Y,y)$ is defined by
$$
d_{GH}^{p}((X, p),(Y, q))=
\inf \{\varepsilon>0:
\exists\ i\in \operatorname{App}_\varepsilon((X, x),(Y, y)) \text { and } j\in\operatorname{App}_\varepsilon((Y, y),(X, x))\}.
$$
\end{Def}

\begin{Def}
We say that two pointed metric spaces, $(X, x)$ and $(Y, y),$ are isometric if there is an isometry $h$ from $(X, x)$ to $(Y, y)$,   such that $h(x)=y$.
\end{Def}

Follows as consequence of the definition this result given in  (\cite[Proposition 1.6.3]{rong10}).
\begin{Prop}
 \label{propeqclass}
 $d_{G H}^{p}((X, x),(Y, y))=0$  if and only if $(X, x)$  is isometric to $(Y, y)$.
\end{Prop}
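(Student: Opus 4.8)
The statement is a biconditional, and the direction $(\Leftarrow)$ is routine: if $h\colon (X,x)\to(Y,y)$ is an isometry with $h(x)=y$, then for every $\varepsilon>0$ the map $h$ itself lies in $\operatorname{Appr}_\varepsilon((X,x),(Y,y))$ and $h^{-1}$ lies in $\operatorname{Appr}_\varepsilon((Y,y),(X,x))$ — conditions 1 and 2 hold trivially (the distortion is $0$), and condition 3 holds because $h$ is surjective, so in fact $B^Y(y,1/\varepsilon-\varepsilon)\subset h(B^X(x,1/\varepsilon))$. Hence $d_{GH}^p((X,x),(Y,y))\le\varepsilon$ for all $\varepsilon>0$, giving $d_{GH}^p=0$.

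For the forward direction $(\Rightarrow)$, the plan is the standard Gromov-Hausdorff diagonal/limit argument adapted to the pointed setting. Assume $d_{GH}^p((X,x),(Y,y))=0$. Then for each $n\in\N$ there is $\varepsilon_n\to 0$ and a map $i_n\in\operatorname{Appr}_{\varepsilon_n}((X,x),(Y,y))$, so $i_n(x)=y$ and $\operatorname{dis}(i_n)\big|_{B^X(x,1/\varepsilon_n)}<\varepsilon_n$. First I would fix a countable dense subset $\{a_1,a_2,\dots\}$ of $X$ (if $X$ is separable; otherwise one works with an arbitrary countable subset and the argument produces an isometry onto a dense subset, but here one should assume enough to make the spaces separable, e.g. proper — the excerpt works with proper spaces). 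For each fixed $k$, once $1/\varepsilon_n > d^X(x,a_k)$ the point $a_k$ lies in $B^X(x,1/\varepsilon_n)$, and then $d^Y(i_n(a_k),y)=|d^Y(i_n(a_k),i_n(x))|\le d^X(a_k,x)+\varepsilon_n$, so the sequence $(i_n(a_k))_n$ is bounded in $Y$. Passing to a diagonal subsequence, one may assume $i_n(a_k)$ converges in $Y$ (using properness/completeness of $Y$ to extract convergent subsequences from bounded sequences) to a point which we call $h(a_k)$. The almost-isometry condition gives, for each $k,\ell$, $|d^Y(i_n(a_k),i_n(a_\ell))-d^X(a_k,a_\ell)|<\varepsilon_n\to 0$, so $d^Y(h(a_k),h(a_\ell))=d^X(a_k,a_\ell)$: thus $h$ is an isometric embedding of the dense set $\{a_k\}$ into $Y$, and since $Y$ is complete it extends uniquely to an isometric embedding $h\colon X\to Y$ with $h(x)=y$.

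It remains to prove $h$ is surjective, and this is the step I expect to be the main obstacle — it is where the pointed conditions, not just the compact-case conditions, really have to be used. Given $q\in Y$, choose $r>d^Y(y,q)$; for $n$ large enough $r<1/\varepsilon_n-\varepsilon_n$, so condition 3 gives a point $b_n\in B^X(x,1/\varepsilon_n)$ with $d^Y(i_n(b_n),q)<\varepsilon_n$, and the displayed inclusion after the definition shows $b_n\in B^X(x,r+2\varepsilon_n)$, hence the $b_n$ stay in a fixed bounded (hence, by properness, relatively compact) ball of $X$. Extracting a convergent subsequence $b_n\to b\in X$ and combining the estimates $d^Y(h(b),i_n(b_n))\le d^Y(h(b),h(b_n))+d^Y(h(b_n),i_n(b_n))$ — where the first term goes to $0$ by continuity of $h$ and the second is controlled by $\operatorname{dis}(i_n)|_{B^X(x,1/\varepsilon_n)}<\varepsilon_n$ once one compares $i_n(b_n)$ with $h(b_n)=\lim_m i_m(b_n)$ — one concludes $h(b)=q$. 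The delicate point to get right is the interchange of the two limits ($n\to\infty$ in $b_n$ and $m\to\infty$ in defining $h(b_n)$), which is handled by a standard $3\varepsilon$-argument using uniform smallness of the distortions on the common ball $B^X(x,1/\varepsilon_n)$; I would also be careful to invoke properness of $X$ and $Y$ explicitly at each compactness extraction, since this is exactly where non-compactness forces a genuine change from the classical proof.
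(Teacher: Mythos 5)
Your argument is correct, and it is essentially the standard proof of this fact: the paper itself gives no proof, delegating the statement to Rong, Proposition 1.6.3, and your diagonal-limit construction on a countable dense subset (using properness for the compactness extractions and for completeness when extending the limit map), followed by surjectivity via the almost-surjectivity condition 3, is exactly the argument behind that cited result. You are also right to flag properness explicitly: it is not stated in the proposition but is genuinely needed, and the paper's subsequent remark confirms the authors restrict to isometry classes of complete proper pointed spaces.
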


\begin{Rem}
By Proposition \ref{propeqclass}, we work with the isometric classes of pointed complete proper metric spaces.  
\end{Rem}

\begin{Def}[Convergence] 
We say that a sequence $\{(X_k,x_k)\}$  of pointed proper metric space converges to $(X,x)$,
and we write $(X_k,x_k)\stackrel{pGH}{\longrightarrow} (X,x)$, if 
$$\lim_{k \rightarrow \infty} d^p_{GH}\left((X_{k}, x_{k}),(X, x)\right)=0.$$
That is, 
if there is a sequence of  $i_k\in \operatorname{App}_{\varepsilon_k}((X_k, x_k),(X, x))$,  such that $\varepsilon_k\to 0$ as $k\to \infty$. 
\end{Def}

An inspiration to multi-point metric spaces is based in \cite{munoz2020}. 

\begin{Def}
Let $(X, x_1,\dots,x_k)$ and $(Y, y_1,\dots,y_k)$ be multi-pointed metric spaces, and  let $\varepsilon >0$. 
If there exists  $i:X \to Y$
such that
\begin{enumerate}
\item $i(x_n)=y_n$ for $n=1,\dots,k$.
\item $dis(i)|_{B(x_n,1/\varepsilon)}<\varepsilon$, for $n=1,\dots,k$.

 \item $B^Y(y_n,1/\varepsilon-\varepsilon)\subset N_\varepsilon(i(B^X(x_n,1/\varepsilon)))$.
 \end{enumerate}
For $x_n \in X$ and $y_n \in Y$, $n=1,\dots,k$, we call an  $\varepsilon$-multi-pointed Gromov-Hausdorff approximation to a function that 
that satisfies 1,2 and 3, and the set of all $\varepsilon$-multi-pointed Gromov-Hausdorff approximations is denoted by
$$
\operatorname{Appr}_{\varepsilon}((X, x_1,\dots,x_k),(Y, y_1,\dots,y_k)).
$$
\end{Def}

\begin{Def}
A map $f : X \to Y$ between two metric spaces is called $C$-Lipschitz and co-Lipschitz map (briefly, $C$-LcL)
if for any $x \in X$ such that for all $r > 0$, the metric balls satisfy
$$B(f(x),C^{-1}r)\subset  f(B(x,r))\subset B(f(x),Cr).$$
\end{Def}

\subsection{ Gromov-Hausdorff  Convergence of maps}

On \cite{sinaei16} we found the following definition and lemma (Definition 2.11 and Lemma 2.12), compare with Appendix of \cite{grovepetersen91}.

\begin{Def}
Let $\left(X_{i}, x_{i}\right),(X, x),\left(Y_{i}, y_{i}\right)$ and $(Y, y)$ be pointed
metric spaces such that $\left(X_{i}, y_{i}\right)$ converges to $(X, x)$ in the pointed Gromov-Hausdorff topology (resp. $\left(Y_{i}, y_{i}\right)$ converges to $(Y, y)$). We say that a sequence of maps $f_{i} :\left(X_{i}, x_{i}\right) \rightarrow$
$\left(Y_{i}, y_{i}\right)$ converges to a map $f :(X, x) \rightarrow(Y, y)$ if there exists a subsequence $X_{i_{k}}$ such that if
$p_{i_{k}} \in X_{i_{k}}$ and $p_{i_{k}}$ converges to $x$ (in  $\coprod  X_{i_{k}} \coprod X $ with the admissible metric), then  $f_{i_{k}}(p_{i_{k}})$
converges to $f(p) .$
\end{Def}


But to ours purpose we don't use the metric space $\coprod  X_{i_{k}} \coprod X $, so we use an alternative definition for Gromov-Hausdorff convergence of maps, (compare with \cite{wong08} )


\begin{Def}
If $(X_k,x_k) \stackrel{\text { pGH }}{\longrightarrow} (X,x)$, via $i_k \in \operatorname{App}_{\varepsilon_k}((X_k,x_k),(X,x))$,  we say
that points $x_{k} \in X_{k}$ converge to a point $x \in X$  if and only if 
$d^X( i_k(x_{k}), x) \rightarrow 0$. For this convergence we  write  $x_{k} \stackrel{\text { pGH }}{\longrightarrow} x.$
\end{Def}



This permits one to define convergence of maps.
\begin{Def}
If $f_{k}: (X_{k},x_k) \longrightarrow (Y_{k},y_k)$ are maps, $(X_{k},x_k) \stackrel{pG H}{\longrightarrow} (X,x)$ and $(Y_{k},y_k) \stackrel{pG H}{\longrightarrow} (Y,y)$, then we say that $f_{k}$ converge on the sense of pointed Gromov-Hausdorff to a map $f: (X,x) \longrightarrow (Y,y)$ if there exist a subsequence $X_{k_j}$ such that if  $x_{k_j} \stackrel{\text { pGH }}{\longrightarrow} x$, then 
$f_{k_j}\left(x_{k_j}\right) \stackrel{\text { pGH }}{\longrightarrow} f(x)$.
\end{Def}

Proposition B.1. of \cite{wong08}.

\begin{Prop}
 Let $X_{i}$ and $Y_{i}$ be metric spaces with $X_{i} \stackrel{\text { GH }}{\longrightarrow} X$ and $Y_{i} \stackrel{\text { GH }}{\longrightarrow} Y,$ with
$X$ and $Y$ compact. Suppose that for all $i$ there exist $L$-Lipschitz maps $\psi_{i}: Y_{i} \rightarrow X_{i}$.
Then there exists an $L$-Lipschitz map $\psi: Y \rightarrow X .$ If the $\psi_{i}$ are also surjective, then
so is the limit map $\psi$.
\end{Prop}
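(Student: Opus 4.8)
The plan is to reduce the statement to an Arzel\`a--Ascoli-type argument on the compact space $Y$, by transporting the maps $\psi_i$ to genuine maps $Y\to X$ through pre- and post-composition with Gromov--Hausdorff approximations, and then to pass to a limit along a diagonal subsequence.

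\emph{Setup.} Since $X_i\to X$ and $Y_i\to Y$ in the Gromov--Hausdorff sense, I would fix $\varepsilon_i\downarrow 0$ together with $f_i\in\operatorname{App}_{\varepsilon_i}(X_i,X)$ and $g_i\in\operatorname{App}_{\varepsilon_i}(Y_i,Y)$. Using the $\varepsilon$-inverse fact recalled right after the definition of $d_{GH}$, fix an $\varepsilon_i$-inverse $\tilde g_i\in\operatorname{App}_{4\varepsilon_i}(Y,Y_i)$ of $g_i$, so that $d^{Y_i}(\tilde g_i(g_i(y_i)),y_i)\le 3\varepsilon_i$ and $d^{Y}(g_i(\tilde g_i(y)),y)\le 3\varepsilon_i$. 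Define $\Psi_i:=f_i\circ\psi_i\circ\tilde g_i\colon Y\to X$. Combining $\operatorname{dis}(f_i)<\varepsilon_i$, the $L$-Lipschitz property of $\psi_i$, and $\operatorname{dis}(\tilde g_i)<4\varepsilon_i$, one gets for all $y,y'\in Y$ the estimate $d^X(\Psi_i(y),\Psi_i(y'))\le L\,d^Y(y,y')+(4L+1)\varepsilon_i$, i.e.\ $\{\Psi_i\}$ is an equi-almost-$L$-Lipschitz family of (a priori discontinuous, noncanonical) maps.

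\emph{Extraction and limit.} Choose a countable dense set $D=\{q_1,q_2,\dots\}\subset Y$ (possible, $Y$ being compact hence separable). Because $X$ is compact, each sequence $(\Psi_i(q_n))_i$ is precompact; a diagonal argument yields a subsequence, still written $(\Psi_i)$, along which $\Psi_i(q_n)$ converges in $X$ to a point $\psi(q_n)$ for every $n$. Passing to the limit in the estimate above gives $d^X(\psi(q_m),\psi(q_n))\le L\,d^Y(q_m,q_n)$, so $\psi$ is $L$-Lipschitz on $D$; since $X$ is complete and $D$ is dense in $Y$, it extends uniquely to an $L$-Lipschitz map $\psi\colon Y\to X$. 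A routine three-$\varepsilon$ argument (approximate $y$ by $q_n\in D$, use equicontinuity of the $\Psi_i$ and Lipschitzness of $\psi$) upgrades this to $\Psi_i(y)\to\psi(y)$ for \emph{every} $y\in Y$; in particular $\psi$ is the Gromov--Hausdorff limit of the $\psi_i$ in the sense discussed above. For the surjectivity claim, assume each $\psi_i$ is onto and fix $x\in X$: by almost surjectivity of $f_i$ pick $x_i\in X_i$ with $d^X(f_i(x_i),x)\le\varepsilon_i$, then $y_i\in Y_i$ with $\psi_i(y_i)=x_i$, and after a further subsequence $g_i(y_i)\to y\in Y$ by compactness of $Y$. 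Writing $d^X(\psi(y),x)\le d^X(\psi(y),\Psi_i(g_i(y_i)))+d^X(\Psi_i(g_i(y_i)),f_i(x_i))+d^X(f_i(x_i),x)$, the first term tends to $0$ by pointwise convergence $\Psi_i\to\psi$ plus $g_i(y_i)\to y$ and equicontinuity, the middle term is $\le L\,d^{Y_i}(\tilde g_i(g_i(y_i)),y_i)+\varepsilon_i\le(3L+1)\varepsilon_i$ by the $\varepsilon$-inverse inequality and $\operatorname{dis}(f_i)<\varepsilon_i$, and the last is $\le\varepsilon_i$; letting $i\to\infty$ gives $\psi(y)=x$.

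\emph{Main obstacle.} The crux is that Gromov--Hausdorff approximations are neither continuous nor canonically chosen, so classical Arzel\`a--Ascoli does not apply directly; the substitute is the dense-subset plus diagonal extraction, and the delicate point is keeping all the error terms (the distortions of $f_i$ and $\tilde g_i$, and the $3\varepsilon_i$ coming from the approximate inverse) uniform in $i$ so that they genuinely vanish in the limit. Compactness of $X$ and $Y$ is used in an essential way twice — sequential compactness of $X$ to extract the limits $\psi(q_n)$, and completeness of $X$ together with separability of $Y$ to pass from $D$ to all of $Y$ — which is precisely why the statement needs the compactness hypothesis and why a pointed/noncompact version requires localizing the whole argument around the base points.
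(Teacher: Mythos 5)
Your proof is correct. Note, however, that the paper does not prove this statement at all: it is quoted verbatim as Proposition B.1 of \cite{wong08} and used as a black box, so there is no internal proof to compare against. Your argument is the standard one for such limit statements: transport $\psi_i$ to maps $\Psi_i=f_i\circ\psi_i\circ\tilde g_i\colon Y\to X$ via Gromov--Hausdorff approximations and an approximate inverse, observe the uniform almost-Lipschitz bound $d^X(\Psi_i(y),\Psi_i(y'))\le L\,d^Y(y,y')+(4L+1)\varepsilon_i$, extract a diagonal subsequence on a countable dense subset of the compact space $Y$, and extend the limit by density and completeness of $X$; the surjectivity argument via $\psi_i(y_i)=x_i$, compactness of $Y$, and the $(3L+1)\varepsilon_i$ estimate from the approximate-inverse inequality is also sound. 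You correctly identified and handled the only delicate point, namely that the approximations $f_i,\tilde g_i$ are neither continuous nor canonical, so classical Arzel\`a--Ascoli must be replaced by the dense-set/diagonal extraction with explicit error terms; the further subsequence taken for each target point $x$ in the surjectivity step is harmless because pointwise convergence $\Psi_i\to\psi$ already holds along the outer subsequence. This gives a self-contained proof of a result the paper merely imports, and it also makes visible exactly where compactness enters, which is consistent with the paper's motivation for localizing to the pointed setting.
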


A family of maps $f_{i} : X_{i} \rightarrow Y_{i}$ is called equicontinuous if, for any $\varepsilon>0,$ there
is $\delta>0$ such that $d_{i}\left(x_{i}, y_{i}\right)<\delta$ implies that $d\left(f_{i}\left(x_{i}\right), f_{i}\left(y_{i}\right)\right)<\varepsilon$ for all $x_{i}, y_{i} \in X_{i}$
and all $i .$

Finally, we mention Lemma 1.6.12 of \cite{rong10}.


\begin{Lemma}[Convergence of maps]\label{Lem:1.6.12Rong}
Let $\left(X_{i}, p_{i}\right) \overset{d_{G H}}{\longrightarrow}(X, p)$ and $\left(Y_{i}, q_{i}\right) \overset{d_{G H}}{\longrightarrow }(Y, q)$, and let $f_{i} :\left(X_{i}, p_{i}\right) \rightarrow\left(Y_{i}, q_{i}\right)$ be a continuous.
\begin{enumerate}
\item  
If $f_{i}$ are equicontinuous, then there is a uniform continuous map
$f :(X, p) \rightarrow(Y, q)$ and a subsequence $X_{i_{k}}$ such that if $x_{i_{k}} \in X_{i_{k}}$, $x_{i_{k}} \rightarrow x,$ then
$f_{i_{k}}\left(x_{i_{k}}\right) \rightarrow f(x)$.  (We then say $f_{i} \rightarrow f.$)
\item If $f_{i}$ are isometries, then the limit map $f :(X, p) \rightarrow(Y, q)$ is also
an isometry.
\end{enumerate}
\end{Lemma}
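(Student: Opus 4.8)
The plan is to prove this by a Gromov--Hausdorff version of the Arzel\`a--Ascoli theorem: extract the limit map $f$ by a Cantor diagonal argument over a countable dense subset of $X$, and then transport continuity and, in case 2, the metric properties to $f$. Concretely, fix $\varepsilon_i$-pointed approximations $\phi_i\in\operatorname{Appr}_{\varepsilon_i}((X_i,p_i),(X,p))$ and $\delta_i$-pointed approximations $\psi_i\in\operatorname{Appr}_{\delta_i}((Y_i,q_i),(Y,q))$ with $\varepsilon_i,\delta_i\to0$, and recall that $f_i(p_i)=q_i$. Pick a countable dense set $D=\{a_1,a_2,\dots\}\subset X$; since $X$ is proper each $a_n$ lies in a fixed ball $B^X(p,R_n)$, so for $i$ large the almost-surjectivity of $\phi_i$ yields points $x_i^n\in B^{X_i}(p_i,R_n+1)$ with $d^X(\phi_i(x_i^n),a_n)\to0$. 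The preliminary claim (the main obstacle, discussed below) is that each sequence $\big(\psi_i(f_i(x_i^n))\big)_i$ remains inside a bounded subset of $Y$ depending only on $n$. Granting this, properness of $Y$ together with a diagonal argument over $D$ produce a subsequence $(i_k)$ along which $\psi_{i_k}(f_{i_k}(x_{i_k}^n))$ converges in $Y$ for every $n$, and one sets $f(a_n):=\lim_k\psi_{i_k}(f_{i_k}(x_{i_k}^n))$.

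Next I would check that $f\colon D\to Y$ is uniformly continuous. The distortion bound for $\phi_i$ on $B^{X_i}(p_i,1/\varepsilon_i)$ gives $d^{X_i}(x_i^m,x_i^n)\to d^X(a_m,a_n)$, and the distortion bound for $\psi_i$ (used together with the boundedness claim) gives $d^{Y_{i_k}}(f_{i_k}(x_{i_k}^m),f_{i_k}(x_{i_k}^n))\to d^Y(f(a_m),f(a_n))$. Combining this with an equicontinuity modulus $\delta=\delta(\varepsilon)$ for the family $(f_i)$: if $d^X(a_m,a_n)<\delta/2$ then $d^{X_{i_k}}(x_{i_k}^m,x_{i_k}^n)<\delta$ eventually, hence $d^{Y_{i_k}}(f_{i_k}(x_{i_k}^m),f_{i_k}(x_{i_k}^n))<\varepsilon$, so $d^Y(f(a_m),f(a_n))\le\varepsilon$. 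By completeness of $Y$ this extends uniquely to a uniformly continuous map $f\colon(X,p)\to(Y,q)$; taking $x_{i_k}=p_{i_k}$ gives $f(p)=q$. For the convergence statement, let $x_{i_k}\in X_{i_k}$ with $\phi_{i_k}(x_{i_k})\to x$, and pick $a_n\in D$ near $x$: equicontinuity controls $d^{Y_{i_k}}(f_{i_k}(x_{i_k}),f_{i_k}(x_{i_k}^n))$ via $d^{X_{i_k}}(x_{i_k},x_{i_k}^n)$, which is close to $d^X(\phi_{i_k}(x_{i_k}),a_n)$; passing to $\psi_{i_k}$-images, using $\psi_{i_k}(f_{i_k}(x_{i_k}^n))\to f(a_n)$ and uniform continuity of $f$, and then letting $a_n\to x$, one obtains $f_{i_k}(x_{i_k})\to f(x)$ in the pointed Gromov--Hausdorff sense.

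For part 2, an isometry is $1$-Lipschitz, so the family $(f_i)$ is equicontinuous and part 1 furnishes a limit $f\colon(X,p)\to(Y,q)$. Since each $f_i$ preserves distances, the computation above yields $d^Y(f(a_m),f(a_n))=\lim_k d^{X_{i_k}}(x_{i_k}^m,x_{i_k}^n)=d^X(a_m,a_n)$, and density of $D$ with continuity upgrades this to $d^Y(f(x),f(x'))=d^X(x,x')$ for all $x,x'\in X$, so $f$ is an isometric embedding. For surjectivity, given $y\in Y$ choose $\tilde y_{i_k}\in Y_{i_k}$ with $\psi_{i_k}(\tilde y_{i_k})\to y$ (almost-surjectivity of $\psi_{i_k}$, since $y$ lies in a fixed ball about $q$), write $\tilde y_{i_k}=f_{i_k}(z_{i_k})$ with $z_{i_k}$ in a bounded ball about $p_{i_k}$ (using that $f_{i_k}$ is a surjective isometry, so $d^{X_{i_k}}(z_{i_k},p_{i_k})=d^{Y_{i_k}}(\tilde y_{i_k},q_{i_k})$ is bounded), extract a further subsequence with $\phi_{i_k}(z_{i_k})\to z\in X$ by properness of $X$, and conclude $f(z)=\lim_k f_{i_k}(z_{i_k})=y$ by part 1. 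Hence $f$ is a surjective isometry fixing base points.

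The step I expect to be genuinely delicate — and, strictly speaking, the place where an extra hypothesis on the ambient spaces must be used — is the boundedness claim that $\psi_i(f_i(x_i^n))$ does not escape to infinity in $Y$. Equicontinuity only governs $f_i$ at small scales, so to bound $f_i$ over the whole ball $B^{X_i}(p_i,R_n+1)$ one must chain short steps from $p_i$ to $x_i^n$ and apply equicontinuity roughly $R_n/\delta$ times, which requires the spaces to be length spaces (or at least uniformly semilocally connected near the base points) — precisely the setting of the cited Lemma 1.6.12 of \cite{rong10}; on a disconnected proper space the statement can fail outright. This is also what implicitly legitimizes the diagonal extraction, since without it the candidate limits $f(a_n)$ need not exist.
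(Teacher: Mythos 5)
The paper itself contains no proof of this lemma: it is quoted verbatim as Lemma 1.6.12 of Rong's notes \cite{rong10}, so there is no internal argument to compare yours against. Your Arzel\`a--Ascoli-type proof (diagonal extraction over a countable dense subset via the pointed approximations, properness of $Y$ to obtain the limits $f(a_n)$, equicontinuity to get uniform continuity of $f$ and to upgrade convergence from the dense set to arbitrary convergent sequences, and, for part 2, passage of distance preservation to the limit plus an approximation argument for surjectivity) is exactly the standard route to the cited result, and the steps you write out are sound.

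Your closing caveat is moreover a genuine issue, not a quibble: with the equicontinuity definition used in this paper and no connectivity assumption, the statement as transcribed here is false. Take $X_i=Y_i=X=Y=\{0\}\cup\{n\in\mathbb{N}:n\ge 1\}\subset\mathbb{R}$ with base point $0$, identity approximations, and $f_i(0)=0$, $f_i(n)=n+i$ for $n\ge 1$: each $f_i$ is continuous and pointed, the family is (vacuously) equicontinuous because the space is uniformly discrete, yet $f_i(1)$ leaves every bounded neighbourhood of the base point, so no subsequential limit map can satisfy the conclusion. So the chaining step you isolate really does require a length-space (or uniformly bounded-image) hypothesis, which is implicit in Rong's setting but absent from the statement as reproduced above. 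One refinement worth adding to your write-up: for part 2 no such extra hypothesis is needed, since $d^{Y_i}(f_i(x),q_i)=d^{X_i}(x,p_i)$ makes your boundedness claim automatic for isometries, so the isometry case does not inherit the gap even though you derive it from part 1.
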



\subsection{Local Stability}


Not always it is possible to modify the definition of a global property to a local property only paraphrasing it as ``in some neighbourhood of a given point'' \cite{irwin2001}. Our problem it is to find a good and consistent definition to have a comparison of two non-compact dynamical systems that are close one to other in the pointed Gromov-Hausdorff sense, and the behavior of his homeomorphisms are close in the sense of $C^0$-topology locally. In spite of these observations we find the straightforwardly adapted definitions useful.


\begin{Def}
Let $f: X \rightarrow X$ and $g: Y \rightarrow Y$ be homeomorphisms of topological spaces $X$
and $Y .$ A topological conjugacy from $f$ to $g$ is a homeomorphism $h: X \rightarrow Y$
such that $h\circ f=g\circ h$.
\end{Def}


\begin{Def}
 Let $U$ and $V$ be open subsets of metric spaces $X$ and $Y$ respectively,
and let $f: X \rightarrow X$ and $g: Y \rightarrow Y$. We say that $f | U$ is 
locally isometric (resp. locally homeomorphic)
 to $g | V$ if there exists an isometry (resp. homeomorphism)
 $h: U \cup f(U) \rightarrow V \cup g(V)$ such that $h(U)=V$ and, for all $x \in U,$
$h\circ f(x)=g\circ h(x)$. 

We say that $f$ is locally isometric (resp. locally homeomorphic)
to $g$ if for all open $U\subset X$, and $V\subset Y$ such a map $h$ exists.

\end{Def}

\subsection{$C^0$-Gromov-Hausdorff}
A slight modification of the Gromov-Hausdorff distance including the $C^{0}$-distance
above yields the following definition.

To introduce a metric in the space of maps of
metric spaces we recall the classical $C^{0}$-distance between the maps.

\begin{Def}
Consider $f,g : X \rightarrow Y$, define the $C^{0}$-distance between $f$ and $g$ by
$$
d_{C^{0}}(f, g)=\sup _{x \in X} d^{Y}(f(x), g(x)).
$$
\end{Def}

Now with the notion of GH-distance, we want compare maps of different metric spaces.
Following Arbieto-Morales (See Definition 1.1 of \cite{AM}).
\begin{Def}
Let $X,Y$ be compact metric spaces. 
 The $C^{0}$-Gromov-Hausdorff distance between maps $f : X \rightarrow X$
and $g : Y \rightarrow Y$ is defined by
$$
\begin{aligned} d_{G H^{0}}(f, g)=& \inf \{\varepsilon>0 : \exists\ i\in \operatorname{App}_\varepsilon(X,Y) \text { and } j\in\operatorname{App}_\varepsilon(Y,X) \text { such that }\\ &\left.d_{C^{0}}(g \circ i, i \circ f)<\varepsilon \text { and } d_{C^{0}}(j \circ g, f \circ j)<\varepsilon\right\}. \end{aligned}
$$
\end{Def}


\begin{Rem}
\label{strong GH0}
 It is easy to prove that if $Y=\{y\}$, then $d_{GH}(X,Y)=\operatorname{diam}(X)$, if we take $f$ and $g$ such as the respective identities we have that $d_{GH^0}(f,g)=\operatorname{diam}(X)$.
\end{Rem} 

The basic properties of the map $d_{GH0}$ are defined below (Theorem 1 of \cite{AM}).

\begin{Thm}\label{Thm:AM1}

 Let $(X,d^X)$ and $(Y,d^Y)$ be compact metric spaces. Let  $f : X \rightarrow X$ and $g : Y \rightarrow Y$ be maps, then
\begin{enumerate}
\item If $X=Y,$ then $d_{G H^{0}}(f, g) \leq d_{C^{0}}(f, g)$ (and the equality is not necessarily
true).
\item $d_{G H}(X, Y) \leq d_{G H^{0}}(f, g)$ and $d_{G H}(X, Y)=d_{G H^{0}}\left(I d_{X}, I d_{Y}\right)$ where $I d_{Z}$ is the identity map of $Z$.
\item  If $X$ and $Y$ are compact and $g$ continuous, then $d_{G H^{0}}(f, g)=0$ if and only if
$f$ and $g$ are isometric.
\item  $d_{G H^{0}}(f, g)=d_{G H^{0}}(g, f)$
\item For any map $r : Z \rightarrow Z$ of any metric space $Z$ one has
$$
d_{G H^{0}}(f, g) \leq 2\left(d_{G H^{0}}(f, r)+d_{G H^{0}}(r, g)\right).
$$
\item $d_{G H^{0}}(f, g) \geq 0$ and if $X$ and $Y$ are bounded, then $d_{G H^{0}}(f, g)<\infty$.
\item If $X$ is compact and there is a sequence of isometries $g_{n} : Y_{n} \rightarrow Y_{n}$ such that
$d_{G H^{0}}\left(f, g_{n}\right) \rightarrow 0$ as $n \rightarrow \infty,$ then $f$ is also an isometry.
\end{enumerate}
\end{Thm}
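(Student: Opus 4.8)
The statement is a list of seven basic properties of $d_{GH^0}$, so I would prove them one by one, and most are short consequences of the definition together with the corresponding facts about $d_{GH}$. For item (1), with $X=Y$ I would simply take $i=j=\mathrm{Id}_X$: then $\mathrm{dis}(i)=0$, $i$ is surjective, and $d_{C^0}(g\circ i,i\circ f)=d_{C^0}(j\circ g,f\circ j)=d_{C^0}(f,g)$, so any $\varepsilon>d_{C^0}(f,g)$ is admissible; the failure of equality follows from Remark \ref{strong GH0} (or an even simpler scaling example). For item (2), the inequality $d_{GH}(X,Y)\le d_{GH^0}(f,g)$ is immediate because every pair $(i,j)$ admissible for $d_{GH^0}(f,g)$ is in particular a pair of $\varepsilon$-GHAs, hence admissible for $d_{GH}(X,Y)$; and for $f=\mathrm{Id}_X$, $g=\mathrm{Id}_Y$ the $C^0$-conditions $d_{C^0}(\mathrm{Id}_Y\circ i,i\circ \mathrm{Id}_X)=d_{C^0}(i,i)=0$ are automatically satisfied, so the infimum defining $d_{GH^0}(\mathrm{Id}_X,\mathrm{Id}_Y)$ is taken over exactly the same set of $\varepsilon$ as the one defining $d_{GH}(X,Y)$. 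Items (4) and (6) are formal: symmetry is built into the definition by swapping the roles of $(i,f)$ and $(j,g)$, and nonnegativity is obvious while finiteness follows from $d_{GH^0}(f,g)\le \mathrm{diam}(X)+\mathrm{diam}(Y)+\text{(bound on }d_{C^0})$, or more simply from item (2) plus boundedness of $d_{GH}$ on bounded spaces together with a crude estimate on the $C^0$-terms.

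The three substantial items are (3), (5), and (7). For item (5), the quasi-triangle inequality, I would take $i_1\in\mathrm{App}_{\varepsilon_1}(X,Z)$, $j_1\in\mathrm{App}_{\varepsilon_1}(Z,X)$ realizing $d_{GH^0}(f,r)<\varepsilon_1$ and $i_2\in\mathrm{App}_{\varepsilon_2}(Z,Y)$, $j_2\in\mathrm{App}_{\varepsilon_2}(Y,Z)$ realizing $d_{GH^0}(r,g)<\varepsilon_2$, and then consider the compositions $i_2\circ i_1$ and $j_1\circ j_2$. I would need the standard estimate $\mathrm{dis}(i_2\circ i_1)\le \mathrm{dis}(i_1)+\mathrm{dis}(i_2)$ and the fact that a composition of an $\varepsilon_1$-GHA with an $\varepsilon_2$-GHA is a $(\varepsilon_1+\varepsilon_2+\text{something})$-GHA — this is where the factor $2$ enters. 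The $C^0$-estimate is the crux: I would bound $d_{C^0}(g\circ (i_2\circ i_1),(i_2\circ i_1)\circ f)$ by inserting $i_2\circ r\circ i_1$ as an intermediate term, writing
\[
d^Y\bigl(g(i_2 i_1(x)),\,i_2 i_1 f(x)\bigr)\le d^Y\bigl(g i_2(i_1 x),\,i_2 r(i_1 x)\bigr)+d^Y\bigl(i_2 r(i_1 x),\,i_2 i_1 f(x)\bigr),
\]
where the first term is $<\varepsilon_2$ by the defining property of $i_2$, and the second is controlled by $\mathrm{dis}(i_2)$ applied to $d^Z(r(i_1 x),i_1 f(x))$, which in turn is $<\varepsilon_1$ by the property of $i_1$. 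Collecting the constants gives a bound of the form $2(\varepsilon_1+\varepsilon_2)$, and taking infima over $\varepsilon_1,\varepsilon_2$ yields (5).

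For item (3), one direction is easy: if $f$ and $g$ are isometric via an isometry $h:X\to Y$ with $h\circ f=g\circ h$, then $i=h$, $j=h^{-1}$ show $d_{GH^0}(f,g)=0$. The converse is the main obstacle of the whole theorem. Given $d_{GH^0}(f,g)=0$, I get sequences $i_n\in\mathrm{App}_{\varepsilon_n}(X,Y)$, $j_n\in\mathrm{App}_{\varepsilon_n}(Y,X)$ with $\varepsilon_n\to 0$ and $d_{C^0}(g\circ i_n,i_n\circ f)\to 0$. By compactness of $X$ and $Y$ and the Arzelà–Ascoli-type argument behind pointed/ordinary GH-convergence (Lemma \ref{Lem:1.6.12Rong}, or rather its compact analogue), after passing to a subsequence the $i_n$ converge to an isometric embedding $h:X\to Y$ which is surjective (since $\mathrm{dis}(i_n)\to 0$ and the $i_n$ are $\varepsilon_n$-onto), hence an isometry; one must be slightly careful that $i_n$ need not be continuous, so the convergence is taken in the GH sense of approximations rather than uniform convergence of honest maps, and $h$ is built as a limit of an equicontinuous family after replacing $i_n$ by genuine continuous $\varepsilon_n$-approximations. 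Then passing to the limit in $d^Y(g(i_n(x)),i_n(f(x)))\to 0$, using continuity of $g$ and $i_n(x)\to h(x)$, $i_n(f(x))\to h(f(x))$, gives $g(h(x))=h(f(x))$ for all $x$, i.e. $h$ is the desired conjugating isometry. Item (7) is then a variant of the same limiting argument: from $d_{GH^0}(f,g_n)\to 0$ with each $g_n$ an isometry, one extracts GH-convergence $Y_n\to X$ (by item (2)), lifts the isometries $g_n$ to a limit isometry of $X$ by part (2) of Lemma \ref{Lem:1.6.12Rong}, and checks that this limit equals $f$ via the $C^0$-terms going to zero, so $f$ is an isometry. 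I expect the bookkeeping around non-continuous approximations and the extraction of a convergent subsequence to be the only genuinely delicate point; everything else is a routine chase through the definitions.
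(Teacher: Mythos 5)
You should first note that the paper itself gives no proof of Theorem \ref{Thm:AM1}: it is quoted from Arbieto--Morales \cite{AM}, and the only related argument in the paper is the proof of the pointed analogue in Section 3 (whose item 7 in fact invokes item 7 of this very theorem). Measured against that standard argument, your outline is the right one: items (1), (2), (4), (6) by direct inspection of the definition, item (5) by composing approximations and inserting the middle map $i_2\circ r\circ i_1$ (this is exactly how the factor $2$ arises, both in \cite{AM} and in the paper's pointed item 5), and items (3), (7) by a compactness/limit argument. Two details, however, need repair, and one minor slip should be fixed: in (6) the finiteness bound must come from an explicit crude approximation (e.g.\ constant maps, giving roughly $\max\{\operatorname{diam}X,\operatorname{diam}Y\}$), not from item (2), which is a lower bound for $d_{GH^0}$, not an upper one.

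In (3), the device of ``replacing $i_n$ by genuine continuous $\varepsilon_n$-approximations'' is neither available nor needed: continuous GH-approximations with comparable error need not exist at all (no continuous map from $[0,1]$ to a finite $\varepsilon$-net of it has small distortion, although the two spaces are $\varepsilon$-close in $d_{GH}$), so equicontinuity/Arzel\`a--Ascoli is the wrong mechanism. The correct substitute is the distortion bound itself: choose a countable dense $S\subset X$, diagonalize so that $i_n(s)$ converges for every $s\in S$, and use $\operatorname{dis}(i_n)\to 0$ to upgrade this to pointwise convergence on all of $X$ to a distance-preserving map $h$, surjective by almost-surjectivity of the $i_n$ and compactness; then $g(i_n(x))\to g(h(x))$ by continuity of $g$, while $d^Y(g\circ i_n(x), i_n\circ f(x))<\varepsilon_n$ forces $h\circ f=g\circ h$ (note $f$ need not be continuous, which is why the convergence must be established at every point of $X$, not just on $S$). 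In (7), identifying the limit isometry supplied by Lemma \ref{Lem:1.6.12Rong}(2) with $f$ is genuinely problematic: $f$ is not assumed continuous and the $j_n$ in the definition of $d_{GH^0}$ are not approximate inverses of the $i_n$, so from $j_n(g_n(i_n(x)))\approx f(j_n(i_n(x)))$ you cannot pass to $f(x)$. The clean argument avoids any limit map: for $x,x'\in X$ one has $\bigl|d^X(f(x),f(x'))-d^X(x,x')\bigr|\le 4\varepsilon_n$, because $d^{Y_n}(i_n f(x), i_n f(x'))$ is within $\varepsilon_n$ of $d^X(f(x),f(x'))$, within $2\varepsilon_n$ of $d^{Y_n}(g_n i_n(x), g_n i_n(x'))=d^{Y_n}(i_n(x),i_n(x'))$ since $g_n$ is an isometry, and the latter is within $\varepsilon_n$ of $d^X(x,x')$; letting $n\to\infty$ shows $f$ preserves distances, and a distance-preserving self-map of a compact metric space is automatically surjective. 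With these two repairs your proposal is correct and coincides in substance with the source proof.
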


Note that in particular
$d_{G H^0}(f,g)<\varepsilon$ implies that $\hat d_{G H}(X,Y)<\frac{3}{2} \varepsilon$.

%
%

\section{Pointed $C^0$-Gromov-Hausdorff}

\begin{Def}[Pointed $C^0$-Gromov-Hausdorff]
 Let $(X,x)$, $(Y,y)$ be pointed metric spaces. 
 The Pointed $C^{0}$-Gromov-Hausdorff distance between maps $f : X \rightarrow X$
and $g : Y \rightarrow Y$ is defined by
$$
\begin{aligned} d^p_{G H^{0}}((X,x,f),(Y,y,g))= \inf \{\varepsilon>0 &: \exists\ 
i\in \operatorname{App}_\varepsilon((X,x,f(x)),(Y,y,g(y))) \text { and }\\
&j\in \operatorname{App}_\varepsilon((Y,y,g(y)),(X,x,f(x)))
\text { such that }\\
&\left.
d_{0,\varepsilon,p}(i\circ f,g\circ i)<\varepsilon, d_{0,\varepsilon,p}(j\circ g,f\circ j)<\varepsilon
\right\},
\end{aligned}
$$
where we call 
\begin{align*}
d_{0,\varepsilon,p}(k\circ f,g\circ k)&=\sup_{q\in B(x,1/\varepsilon)} d^{B(g(y),1/\varepsilon)}(k\circ f(q),g\circ k(q)).
\end{align*}
By abuse of notation and if don't required explicit the pointed spaces we write $d^p_{G H^{0}}((X,x,f),(Y,y,g))=d^p_{G H^{0}}(f,g)$.
\end{Def}

The existence of the $\varepsilon$-approximations $i,j$ is a requirement to get the symmetry, but if we guarantee the existence of the  $\varepsilon$-approximations $i$ then exist $4\varepsilon$-approximation $i'$ that  satisfies the conditions of $j$. (The pointed version of the observation made above).


Note that when restricting to compact metric spaces, the $d_{G H^0}$-convergence is stronger than the $d_{G H^0}^{p}$-convergence. It is a consequence of Remark \ref{strong GH0}. On the other hand, 
 let $(X_{n}, p_{n})=(\{0, n\}, 0)$ and $(X,0)=(\{0\},0)$ be pointed metric spaces. We have that $d_{GH}^p\left( (X_{n}, p_{n}),(X, 0)\right)=\frac{1}{n}$, while $X_{n}$ does not converge in $d_{G H}$ to $X$. Then if $f\ne Id$ and $g=Id$, for $\frac1\varepsilon <n$, we have that 
$$d_{G H^{0}}^p\left((\{0,n\},0,f),(\{0\},0,g)\right)=\frac{1}{n}.$$

Therefore, $d_{GH^0}^p$-convergence no implies $d_{GH^0}$-convergence.

Now, with this concept, we give the pointed version of Theorem \ref{Thm:AM1}. 



\begin{Thm}
 Let $(X,x)$ and $(Y,y)$ proper pointed metric spaces. Let  $f : X \rightarrow X$ and $g : Y \rightarrow Y$ maps, then
\begin{enumerate}
\item If $(X,x)=(Y,y),$ then $d^p_{G H^{0}}(f,g) \leq d_{0,\varepsilon}^{x,y}(f,g)$ (and the equality is not necessarily
true).
\item $d^p_{G H}((X,x), (Y,y)) \leq d_{G H^{0}}^p(f, g)$ and $d^p_{G H}((X,x), (Y,y))=d^p_{G H^{0}}\left(I d_{X}, I d_{Y}\right)$ where $I d_{Z}$ is the identity map of $Z$.
\item  $d^p_{G H^{0}}(f, g)=0$ if and only if 
$f$ and $g$ are locally isometric.
\item  $d^p_{G H^{0}}(f, g)=d^p_{G H^{0}}(g, f)$.
\item For $f_m : X_m \rightarrow X_m$ with $f_m$ $C_m$-Lipschitz, $m=1,2,3$. We have that
$$
d^p_{G H^{0}}(f_1, f_3) \leq 2\left(d^p_{G H^{0}}(f_1, f_2)+d^p_{G H^{0}}(f_2, f_3)\right).
$$
\item $0\le d^p_{G H^{0}}(f, g)<\infty$ .
\item If $(X,x)$ is a proper pointed metric space and there is a sequence of isometries $g_{n} : (Y_{n},y_n) \rightarrow (Y_{n},g(y_n))$ such that $d_{G H^{0}}^p\left(f, g_{n}\right) \rightarrow 0$ as $n \rightarrow \infty,$ then $f$ is also an isometry.
\end{enumerate}
\end{Thm}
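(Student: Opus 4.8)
The plan is to mirror the proof of Theorem \ref{Thm:AM1} in \cite{AM}, carrying out every estimate on the metric balls $B^X(x,1/\varepsilon)$, $B^Y(y,1/\varepsilon)$ instead of on the (now non-compact) ambient spaces, and replacing the uniform continuity that was automatic in the compact case by the Lipschitz hypotheses wherever they are needed. Parts (1), (2), (4) and (6) are direct adaptations. For (1) one takes $i=j=\mathrm{Id}_X$: it is trivially an $\varepsilon$-pointed approximation with zero distortion on every ball, so the only contribution to the infimum is the conjugacy term, which here is precisely $d_{0,\varepsilon}^{x,y}(f,g)$; that the inequality can be strict is seen, as in \cite{AM}, by exhibiting a non-identity approximation that does better. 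For (2): on one hand, any pair of approximations realizing a value of $d^p_{GH^0}(f,g)$ are in particular pointed GH-approximations, giving $d^p_{GH}((X,x),(Y,y))\le d^p_{GH^0}(f,g)$; on the other hand, for $f=\mathrm{Id}_X$, $g=\mathrm{Id}_Y$ the conjugacy terms vanish for \emph{every} pointed approximation, so the two infima coincide. Part (4) is built into the symmetric form of the definition. For (6), non-negativity is immediate, and finiteness follows since properness makes $B^X(x,1/\varepsilon)$ totally bounded, so a finite $\varepsilon$-net produces an approximation of the required type, while the conjugacy term on a ball is bounded by the (finite) diameter of a ball.

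Part (3) needs genuine work. For the ``if'' direction, assume $f$ is locally isometric to $g$; given $\varepsilon>0$, apply the definition to $U=B^X(x,1/\varepsilon)$ and a suitable $V=B^Y(y,1/\varepsilon)$ to get an isometry $h\colon U\cup f(U)\to V\cup g(V)$ with $h(U)=V$ and $h\circ f=g\circ h$ on $U$; one arranges $h(x)=y$ and $h(f(x))=g(y)$ (post-composing with a base-point--fixing isometry of $V\cup g(V)$ if necessary), extends $h$ arbitrarily outside $U\cup f(U)$, and checks that the extension lies in $\operatorname{App}_\varepsilon((X,x,f(x)),(Y,y,g(y)))$ with distortion $0$ on $B^X(x,1/\varepsilon)$ and with the conjugacy term equal to $0$; the map $h^{-1}$ plays the role of $j$. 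Letting $\varepsilon\to 0$ gives $d^p_{GH^0}(f,g)=0$. For the ``only if'' direction, pick $\varepsilon_n\to 0$ with approximations $i_n$ (and $j_n$) such that $d_{0,\varepsilon_n,p}(i_n\circ f,g\circ i_n)\to 0$; on each fixed ball $B^X(x,R)$ the distortion of $i_n$ tends to $0$, so the $i_n$ are asymptotically isometric, and by properness and the Arzel\`a--Ascoli argument underlying Lemma \ref{Lem:1.6.12Rong} a subsequence converges to a map that is an isometry on $B^X(x,R)$; a diagonal argument over $R\to\infty$ yields a global isometry $h\colon(X,x)\to(Y,y)$, and the vanishing of the conjugacy terms in the limit gives $h\circ f=g\circ h$, which is exactly local isometry of $f$ and $g$.

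Part (5) is the technical heart and the main obstacle. The idea is to compose approximations: if $i_1\in\operatorname{App}_{\varepsilon_1}$ between $X_1$ and $X_2$ and $i_2\in\operatorname{App}_{\varepsilon_2}$ between $X_2$ and $X_3$, then $i_2\circ i_1$ is an approximation between $X_1$ and $X_3$ whose distortion on a ball is at most $\varepsilon_1+\varepsilon_2$ but controlled only on a smaller radius, since $i_1(B^{X_1}(x_1,r))\subset B^{X_2}(x_2,r+\varepsilon_1)$ by \eqref{eqball} and similarly for $i_2$; this shrinking of radii is what forces the factor $2$. For the conjugacy term one writes
\[
d_{0,\varepsilon,p}(i_2 i_1 f_1,\, f_3 i_2 i_1)\ \le\ d_{0,\varepsilon,p}(i_2 i_1 f_1,\, i_2 f_2 i_1)\ +\ d_{0,\varepsilon,p}(i_2 f_2 i_1,\, f_3 i_2 i_1);
\]
the second summand is bounded by $\varepsilon_2$, being the $i_2$-conjugacy estimate evaluated along $i_1$; the first is bounded using that $i_2$ almost preserves distances together with $d_{0,\varepsilon,p}(i_1 f_1,f_2 i_1)<\varepsilon_1$, and it is here that the hypothesis that $f_1,f_2,f_3$ are $C_m$-Lipschitz enters, to keep the points at which distances are evaluated inside the balls on which the distortion estimates of $i_1$ and $i_2$ are valid. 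Collecting the estimates and passing to infima gives $d^p_{GH^0}(f_1,f_3)\le 2\bigl(d^p_{GH^0}(f_1,f_2)+d^p_{GH^0}(f_2,f_3)\bigr)$. I expect the bookkeeping of radii --- verifying that after finitely many shrinkings of $1/\varepsilon$ everything still lies in the right balls --- and the interplay of the three Lipschitz constants to be the fiddly part. Finally, for (7): by part (2), $d^p_{GH}((X,x),(Y_n,y_n))\le d^p_{GH^0}(f,g_n)\to 0$, so $(Y_n,y_n)\stackrel{pGH}{\longrightarrow}(X,x)$; the isometries $g_n$ are equicontinuous, so by Lemma \ref{Lem:1.6.12Rong}(2) (using properness of $X$) a subsequence converges to an isometry $\bar g\colon(X,x)\to(X,x)$; from $d_{0,\varepsilon_n,p}(i_n\circ f,g_n\circ i_n)\to 0$, together with the fact that the $i_n$ also realize the pointed GH-convergence, one gets that $f(q)$ and $\bar g(q)$ are limits of the same sequence for $q$ in any fixed ball $B^X(x,R)$, hence $f=\bar g$ on $B^X(x,R)$ and, letting $R\to\infty$, $f=\bar g$ is an isometry. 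The one compatibility point to check is that the third marked point $g(y_n)$ used in $\operatorname{App}_\varepsilon$ agrees with $g_n(y_n)$ along the convergence, which is exactly why $g_n$ is required to land in $(Y_n,g(y_n))$.
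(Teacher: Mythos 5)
Your proposal follows the paper's own proof essentially step by step: identity approximations for (1)--(2), a limit of $1/n$-approximations converging on compact balls (properness) for (3), the same three-term splitting of the conjugacy defect for the composed approximation $i_{23}\circ i_{12}$ together with the Lipschitz hypotheses used precisely to keep the evaluated points inside the balls where the distortion bounds hold for (5) (the radius bookkeeping you defer is exactly the paper's Proposition \ref{p:tinq1}), and a ball-wise localization followed by exhaustion over $r\to\infty$ for (7). The only minor variation is in (7), where the paper simply applies Theorem \ref{Thm:AM1}(7) to the restrictions $f|_{B(x,r)}$ on compact balls, while you rebuild the limit isometry via Lemma \ref{Lem:1.6.12Rong} and identify it with $f$; both are the same localization argument, so your approach matches the paper's.
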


\begin{proof}
\begin{enumerate}
\item If $(X,x)=(Y,y)$. Take $\delta>0$, fix small $\varepsilon_0$, and let $\varepsilon=d_{0,\varepsilon_0}^{x,y}(f,g)+\delta$ be, $i=j=Id_{X}$.
Then  $i,j\in \operatorname{Appr}_{\varepsilon_0}((X,x,x),(X,x,x))$. Therefore 
\begin{align*}
d_{0,\varepsilon_0}^{x,y}(f,g)
&=\sup_{p\in B(1/\varepsilon_0,x)}d^{B(g(y),1/\varepsilon_0)}(i\circ f(p),g\circ i(p))\\
&=\sup_{p\in B(1/\varepsilon_0,x)}d^{B(g(y),1/\varepsilon_0)} ( f(p),g(p))\\
&<\varepsilon.
\end{align*}

Therefore, $d^p_{G H^{0}}((X,x,f),(Y,y,g)) \leq \varepsilon=d_{0,\varepsilon_0}^p(f,g)+\delta$, since $\delta$ is arbitrary, $d^p_{G H^{0}}(f, g) \leq d^p_{0,\varepsilon_0}(f, g) .$

If we take to locally isometric maps $f$ and $g$, but no equal, we get $d^p_{G H^{0}}(f, g)=0$ but 
$d_{0,\varepsilon}^{x,y}(f,g)>0$.


\item  It follows easily from the definitions that $d^p_{G H}((X,x), (Y,y)) \leq d^p_{G H^{0}}(f, g)$.  
Suppose that $d^p_{G H}((X,x), (Y,y))<\varepsilon$, then fix $\delta>0$, such that $\varepsilon<d^p_{G H}((X,x), (Y,y))+\delta$, and 
 $i\in \operatorname{App}_\varepsilon((X,x,f(x)),(Y,y,g(y)))$, 
 $j\in \operatorname{App}_\varepsilon((Y,y,g(y)),(X,x,f(x)))$. On the other hand 
\begin{align*}
d_{0,\varepsilon_0}^{x,y}(Id_X,Id_Y)
&=\sup_{p\in B(1/\varepsilon,x)}d^{B(y,1/\varepsilon)}(i\circ Id_X(p),Id_Y\circ i(p))\\ 
&=0< \varepsilon,
\end{align*}
and similarly $d_{0,\varepsilon}^{y,x}(Id_Y,Id_X)=0<\varepsilon$.
Then  $d_{G H^{0}}^p\left(I d_{X}, I d_{Y}\right) \leq \varepsilon<d_{G H}((X,x), (Y,y))+\delta$. As $\delta$ is arbitrary, $d_{G H^{0}}^p\left(I d_{X}, I d_{Y}\right) \leq d_{G H}((X,x), (Y,y)) .$ From this we conclude that
$d_{G H^{0}}^p\left(I d_{X}, I d_{Y}\right)=d_{G H}((X,x), (Y,y)) .$

\item 

If $f$ and $g$ are locally isometric, then
there is an locally isometry $h : U\cup f(U) \rightarrow V\cup g(V)$ for all open $U\subset X$, and $V\subset Y$ such that $g \circ h(p)=h \circ f(p)$ for all $p\in U$. Hence, for any $\varepsilon>0$,
$h : B(1/\varepsilon,x)\cup f(B(1/\varepsilon,x))\to B(1/\varepsilon,y)\cup g(B(1/\varepsilon,y))$
and $h^{-1}$ are locally isometries, in particular  $h\in \operatorname{App}_{\varepsilon}((X,x,f(x)),(Y,y,g(y))$ and $h^{-1}\in \operatorname{App}_{\varepsilon}((Y,y,g(y),(X,x,f(x)))$
satisfying
\begin{align*}
\sup_{p\in B(1/\varepsilon,x)}d^{B(g(y),1/\varepsilon)}
(h\circ f(p), g\circ h(p))=&
\sup_{q\in B(1/\varepsilon,y)}d^{B(f(x),1/\varepsilon)}
(g\circ h^{-1}(p), h^{-1}\circ f(p))\\
=&0<\varepsilon.
\end{align*}
Hence $d^p_{G H^{0}}(f, g) \leq \varepsilon.$ As $\varepsilon$ is arbitrary, $d^p_{G H^{0}}(f, g)=0.$

On the other hand, suppose that 
$d^p_{G H^{0}}(f, g)=0$,  then there $i_n\in \operatorname{App}_{1/n} ((X, x,f(x)),(Y, y,g(y)))$,  and $j_n\in \operatorname{App}_{1/n} ((Y, y,g(y)),(X, x,f(x)))$, such as
$$
d_{0,\frac1n}^{x,y}(f,g)
=\sup_{p\in B(x,n)}d^{B(g(y),n)}(i_n\circ f(p) ,g\circ i_n(p))<\frac{1}n
$$
and
$$
d_{0,\frac1n}^{y,x}(g,f)
=\sup_{q\in B(y,n)}d^{B(f(x),n)}(j_n\circ g(q) ,f\circ j_n(q))<\frac{1}n.
$$
We can extend $i_n$ to a map from $X$ to $Y$ as a $\operatorname{App}_{2/n} ((X,x,f(x)),(Y,y,g(y))$, and similarly $j_n$. Because $X$ and $Y$ are
proper (closed balls are compact). Therefore the sequence
of maps $i_n$, $j_n$ has a subsequence that converges uniformly on compact sets to an
isometry from $X$ to $Y$ satisfying the conditions.

\item Follows from  the definition.
\item Now fix $\delta>0$, and from definition, there exist $i_{mn}\in \operatorname{App}_{\varepsilon_{mn}}((X_m,x_m,f_m(x_m)),(X_n,x_n,f_n(x_n))$, with
$$\varepsilon_{mn}< d^p_{GH0}(f_m,f_n)+\delta,$$
and such that

$$
d^{x_mx_n}_{0,\varepsilon_{mn}}(f_m,f_n)=\sup_{B^m(x_m,\varepsilon_{mn}^{-1})} d^{B^n(f_n(x_n),\varepsilon_{mn}^{-1})}( i_{mn}\circ f_n,f_m\circ i_{mn})<\varepsilon_{mn}
$$
for $(m,n)$ equal to $(1,2)$, $(2,3)$, $(2,1)$ or $(3,2)$, and $\varepsilon_{mn}=\varepsilon_{nm}<\frac12$.

Similar to the proof of triangle inequality (\ref{p:tinq1}), we can show that $$i_{kl}\in\operatorname{App}_{\varepsilon_{kl}}((X_k,x_k,f_k(x_k)),(X_l,x_l,f_l(x_l)),$$
for $i_{kl}=i_{k2}\circ i_{2l}$
for $(k,l)=(1,3)$ or $(3,1)$, and 
$\varepsilon_{kl}=\varepsilon_{lk}=\min\{2(\varepsilon_{k2}+\varepsilon_{2l}),\frac12\}$.

Observe that if $f_k$ is $C_k$-Lipschitz for $k=1,2$ then

$f_1\left(B^1(x_1,\varepsilon_{13}^{-1})\right)\subset B^1(f_1(x_1),C_1\varepsilon_{13}^{-1})$. So, then $i_{12}\left[ f_1\left(B^1(x_1,\varepsilon_{13}^{-1})\right)\right]
\subset B^2(f_2(x_2),C_1\varepsilon_{13}^{-1}+\varepsilon_{12})
\subset B^2(f_2(x_2),\varepsilon_{23}^{-1})
$ and 
$i_{12}\left(B^1(x_1,\varepsilon_{13}^{-1}) \right)\subset B^2(x_2,\varepsilon_{13}^{-1}+\varepsilon_{12})
$. Therefore, 

$$f_2\left[ i_{12}\left(B^1(x_1,\varepsilon_{13}^{-1}) \right) \right]
\subset B^2(f_2(x_2),C_2(\varepsilon_{13}^{-1}+\varepsilon_{12}))
\subset B^2(f_2(x_2),\varepsilon_{23}^{-1}).
$$

Then
$$
\begin{aligned}
d^3&( i_{13}\circ f_1(p_1), f_3\circ i_{13}(p_1))\\ 
&\le 
d^3( i_{13}\circ f_1(p_1), i_{23}\circ f_2\circ i_{12}(p_1)) +
d^3( i_{23}\circ f_2\circ i_{12}(p_1), f_3\circ i_{13}(p_1)) \\
&= d^3( i_{23}\left(i_{12}\circ f_1(p_1)\right), i_{23}\left( f_2\circ i_{12}(p_1)\right))+
d^3( i_{23}\circ f_2\left( i_{12}(p_1)\right), f_3\circ i_{23}\left(i_{12}(p_1)\right)) \\
&\le d^2\left(i_{12}\circ f_1(p_1),  f_2\circ i_{12}(p_1)\right)+\varepsilon_{23}+\varepsilon_{23}\\
&\le \varepsilon_{12}+2\varepsilon_{23}< \varepsilon_{13}.
\end{aligned}
$$
Now, we have  
$$
d^{x_1x_3}_{0,\varepsilon_{13}}(f_1,f_3)=
\sup_{p_1\in B^1(x_1,\varepsilon_{13}^{-1})} 
d^{ B^3(f_3(x_3),\varepsilon_{13}^{-1}) }(
i_{23}\circ i_{12}f_1(p_1), f_3\circ i_{23}\circ i_{12})<
\varepsilon_{13}$$
and similarly
$$
d^{x_3x_1}_{0,\varepsilon_{31}}(f_3,f_1)<\varepsilon_{31}.
$$
And this implies that
$$
d^p_{GH^0}(f_1,f_3)\le   2(\varepsilon_{12}+\varepsilon_{23})
\le 2(d^p_{GH^0}(f_1,f_2)+d^p_{GH^0}(f_2,f_3)+2\delta),
$$
and since $\delta$ is arbitrary  we obtain
$$
d^p_{GH^0}(f_1,f_3)\le   2\left(d^p_{GH^0}(f_1,f_2)+d^p_{GH^0}(f_2,f_3)\right).
$$

\item Follows immediately by definition and by the localness of the distance.
\justifying{
\item Since $d_{GH^0}^p(f,g_n)\to 0$, then for any $r>0$, we can assume that $d_{GH0}^p(f|_{B(x,r)},g_n|_{B(y_n,r)})\to 0$, applying the Theorem \ref{Thm:AM1}.7, then $f|_{B(x,r)}=f_r$ is a isometry. Let $X=\bigcup_rB(x,r)$, clearly we can assume that $B(x,r)\subset B(x,R)$ for $r<R$, then $f_r=f_R|_{B(x,r)}$. Then we have that $f$ is a isometry.}
\end{enumerate}
\end{proof}

\section{Topological stability}

\begin{Def}
A homeomorphism $f: X \rightarrow X$ of a compact metric space $X$ is topologically stable if for every $\varepsilon>0$ there is $\delta>0$ such that for every homeomorphism $g: X \rightarrow X$ with $d_{C^{0}}(f, g)<\delta$ there is a continuous map $h: X \rightarrow X$ with
$$
d_{C^{0}}\left(h, I d_{X}\right)<\varepsilon \text { such that } f \circ h=h \circ g
.$$
\end{Def}

\begin{Def}
A homeomorphism $f: X \rightarrow X$ of a compact metric space $X$ is topologically $GH$-stable if for every $\varepsilon>0$ there is $\delta>0$ such that for every homeomorphism $g: Y \rightarrow Y$ of compact metric space $Y$ satisfying  $d_{GH^{0}}(f, g)<\delta$ there is a continuous map $h\in App_\varepsilon(Y,X)$ such that $f \circ h=h \circ g$.
\end{Def}
The below result gives sufficients conditions for $GH$-stability (Theorem 4 \cite{AM}).
\begin{Thm}
 Every expansive homeomorphism with the POTP of a compact metric space is topologically $GH$-stable.
\end{Thm}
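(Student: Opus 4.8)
\textit{Proof strategy.} The plan is to adapt Walters's classical argument for topological stability of expansive homeomorphisms with shadowing to the Gromov--Hausdorff setting: genuine $g$-orbits live in the perturbed space $Y$, we transport them into $X$ by an $\varepsilon$-GH approximation where they become pseudo-orbits of $f$, and then we shadow them and invoke expansiveness to produce the semiconjugacy. Fix an expansivity constant $c>0$ for $f$. Since topological $GH$-stability for small $\varepsilon$ implies it for large $\varepsilon$ (the same $h$ works, as $\operatorname{App}_{\varepsilon_0}(Y,X)\subset\operatorname{App}_\varepsilon(Y,X)$), we may assume $3\varepsilon<c$, and by running the construction below with $\varepsilon/3$ in place of $\varepsilon$ we may also arrange the final bookkeeping; I suppress this rescaling. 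Using the POTP of $f$, pick $\delta_0>0$ so that every $\delta_0$-pseudo-orbit of $f$ in $X$ is $\varepsilon$-shadowed by a genuine $f$-orbit, and choose $\delta>0$ with $\delta<\min\{\delta_0,\varepsilon\}$ and $3\varepsilon+\delta<c$. Now let $g\colon Y\to Y$ be a homeomorphism of a compact metric space with $d_{GH^0}(f,g)<\delta$; by definition there is $j\in\operatorname{App}_\delta(Y,X)$ with $d_{C^0}(j\circ g,f\circ j)<\delta$.

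\textit{Construction of $h$ and the conjugacy.} Given $y\in Y$, push its full $g$-orbit into $X$. From $d_{C^0}(j\circ g,f\circ j)<\delta\le\delta_0$ we get $d^X\big(j(g^{n+1}(y)),f(j(g^n(y)))\big)<\delta_0$ for all $n\in\Z$, so $\{j(g^n(y))\}_{n\in\Z}$ is a $\delta_0$-pseudo-orbit of $f$. By the POTP it is $\varepsilon$-shadowed, and since $2\varepsilon<c$ expansiveness forces the shadowing point to be unique; call it $h(y)$, so $d^X\big(f^n(h(y)),j(g^n(y))\big)<\varepsilon$ for all $n\in\Z$. Taking $n=0$ gives $d_{C^0}(h,j)\le\varepsilon$, hence $\operatorname{dis}(h)\le\operatorname{dis}(j)+2\varepsilon<\delta+2\varepsilon$, and $X=N_\delta(j(Y))\subset N_{\delta+\varepsilon}(h(Y))$; thus $h$ is a $(\delta+2\varepsilon)$-GH approximation from $Y$ to $X$, i.e.\ $h\in\operatorname{App}_\varepsilon(Y,X)$ after the rescaling. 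The relation $f\circ h=h\circ g$ follows from uniqueness: the $g$-orbit of $g(y)$ produces the shifted pseudo-orbit $\{j(g^{n+1}(y))\}_n$, which is $\varepsilon$-shadowed by $h(g(y))$ and also, since $d^X\big(f^n(f(h(y))),j(g^{n+1}(y))\big)=d^X\big(f^{n+1}(h(y)),j(g^{n+1}(y))\big)<\varepsilon$, by $f(h(y))$; hence $h(g(y))=f(h(y))$.

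\textit{Continuity of $h$ — the main obstacle.} Here the usual compact-space proof breaks down because $j$ need be neither continuous nor injective, so one cannot pass pseudo-orbits to a limit coordinatewise. I would instead use the window form of expansiveness on the compact space $X$: there is $N\in\N$ such that $d^X(f^n(a),f^n(b))<c$ for all $|n|\le N$ implies $d^X(a,b)<\varepsilon$. Given $y,y'\in Y$, the distortion bound on $j$ gives $d^X\big(j(g^n(y)),j(g^n(y'))\big)\le d^Y\big(g^n(y),g^n(y')\big)+\delta$, and since $g^{-N},\dots,g^N$ are finitely many maps uniformly continuous on the compact space $Y$, there is $\rho>0$ such that $d^Y(y,y')<\rho$ implies $d^Y\big(g^n(y),g^n(y')\big)<\varepsilon$ for all $|n|\le N$. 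Combining this with the two shadowing estimates for $h(y)$ and $h(y')$ yields, for $|n|\le N$,
\[
d^X\big(f^n(h(y)),f^n(h(y'))\big)<\varepsilon+(\varepsilon+\delta)+\varepsilon=3\varepsilon+\delta<c,
\]
whence $d^X(h(y),h(y'))<\varepsilon$, so $h$ is (uniformly) continuous. Collecting the three ingredients — $h\in\operatorname{App}_\varepsilon(Y,X)$, $h$ continuous, and $f\circ h=h\circ g$ — establishes that $f$ is topologically $GH$-stable. The conceptual heart is Walters's shadowing-plus-expansiveness scheme; the only genuinely new work is transporting orbits through the discontinuous approximation $j$, which is why the continuity step (rather than the existence or the conjugacy of $h$) is the obstacle, along with the routine but slightly delicate tracking of the constants $\varepsilon,\delta,\rho,N,c$.
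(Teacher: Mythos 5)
Your argument is correct in substance and follows essentially the same route as the paper: the compact statement is quoted from Arbieto--Morales, and the paper's own proof of its pointed generalization (Theorem \ref{mainstatheorem}) is exactly your scheme --- transport $g$-orbits through the approximation $j$, shadow the resulting $\delta$-pseudo-orbits, use expansiveness for uniqueness to define $h$ and obtain $f\circ h=h\circ g$, check $h\in \operatorname{App}_\varepsilon(Y,X)$ from $d_{C^0}(h,j)\le\varepsilon$, and prove continuity by uniform expansiveness plus uniform continuity of the finitely many iterates of $g$. The one slip is in the continuity step: you fixed the expansiveness window $N$ for the single tolerance $\varepsilon$ and concluded only $d^X(h(y),h(y'))<\varepsilon$ whenever $d^Y(y,y')<\rho$, which is an oscillation bound, not continuity; to conclude you must run the same window lemma for an arbitrary target $\lambda>0$ (so $N=N(\lambda)$ and $\rho=\rho(\lambda)$), exactly as the paper does when it fixes $\lambda$ and invokes Lemma \ref{lem:3.3lee} --- the estimate $3\varepsilon+\delta<c$ is independent of $N$, so the repair is immediate.
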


Following \cite{lee18}, we give the notions of expansiveness, shadowing property and topological stability for homeomorphisms on non-compact metric spaces, that their are dynamical properties.  We denote $\mathcal{C}(X)$ the collection of continuous functions from $X$ to $(0, \infty)$.

\begin{Def}
 Let $X$ be a metrizable space and $f$ be a homeomorphism of $X$ onto itself. We say that
 
\begin{enumerate}
\item $f$ is $\mathcal{C}$-expansive if there exists a metric $d$ for $X$ and $\delta \in \mathcal{C}(X)$ such that $d\left(f^{n}(x), f^{n}(y)\right)<\delta\left(f^{n}(x)\right)(x, y \in X)$ for all $n \in \mathbb{Z}$ implies $x=y$.
\item $f$ has the $\mathcal{C}$-shadowing property if for any $\varepsilon \in \mathcal{C}(X),$ there exists $\delta \in \mathcal{C}(X)$ such that for any $\delta$-pseudo orbit $\left\{x_{n}\right\}_{n \in \mathbb{Z}}$ of $f ;$ i.e., $d\left(f\left(x_{n}\right), x_{n+1}\right)<\delta\left(f\left(x_{n}\right)\right)$ there is a point $x \in X$ which $\varepsilon$-traces the pseudo orbit, that is, $d\left(f^{n}(x), x_{n}\right)<$ $\varepsilon\left(f^{n}(x)\right)$ for all $n \in \mathbb{Z}$.
\item $f$ is $\mathcal{C}$-topologically stable if for any $\varepsilon \in \mathcal{C}(X),$ there is $\delta \in \mathcal{C}(X)$ such that if $g$ is any homeomorphism of $X$ with $d(f(x), g(x))<\delta(f(x))$ for all $x \in X,$ then there is a continuous map $h: X \rightarrow X$ with $h \circ g=f \circ h$ and $d(h(x), x)<\varepsilon(h(x))$ for all $x \in X$.
\end{enumerate}
\end{Def}

Some technical necessary results are the following

\begin{Lemma}[Lemma 2.8 of \cite{lee18}]
\label{lem:2.8lee}
 For any $\alpha \in \mathcal{C}(X)$, there is $\gamma\in \mathcal{C}(X)$ such that 
 $\gamma(x) < \inf\{\alpha(y): y\in B(x, \gamma(x))\}$ for all $x\in X$.
\end{Lemma}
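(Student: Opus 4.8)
The statement is Lemma 2.8 of \cite{lee18}, so one option is simply to invoke it; but a self-contained argument is short, and the plan below works on an arbitrary metric space $X$ (no properness or local compactness needed). The obstacle to neutralize is the self-referential shape of the conclusion: the unknown $\gamma(x)$ occurs simultaneously as the value to be bounded and as the radius of the ball over which $\alpha$ must be controlled. The plan is to remove this circularity by passing to an auxiliary function whose dependence on the distance is \emph{linear}, so that the self-reference can be absorbed by a harmless rescaling.

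First I would introduce the inf-convolution of $\alpha$ with the metric,
\[
g(x)=\inf_{y\in X}\bigl(\alpha(y)+d(x,y)\bigr),
\]
and establish the two basic facts about it. It is $1$-Lipschitz (each map $y\mapsto \alpha(y)+d(\cdot,y)$ is $1$-Lipschitz, and an infimum of such functions, being bounded above by $\alpha$ hence finite, is again $1$-Lipschitz), and therefore continuous. It is strictly positive, since $g(x)=0$ would yield a sequence $y_n$ with $\alpha(y_n)\to 0$ and $d(x,y_n)\to 0$; the latter forces $y_n\to x$, whence $\alpha(y_n)\to\alpha(x)>0$ by continuity of $\alpha$, a contradiction. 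Thus $g\in\mathcal{C}(X)$.

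Then I would simply set $\gamma=\tfrac13 g\in\mathcal{C}(X)$ and verify the conclusion directly. For any $x\in X$ and any $y\in B(x,\gamma(x))$, the definition of $g$ gives
\[
\alpha(y)\ \ge\ g(x)-d(x,y)\ >\ g(x)-\gamma(x)\ =\ \tfrac23\,g(x),
\]
so that $\inf\{\alpha(y):y\in B(x,\gamma(x))\}\ge \tfrac23 g(x)>\tfrac13 g(x)=\gamma(x)$, which is exactly the asserted inequality.

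I do not expect any genuine difficulty here: the only step that needs an idea is producing the auxiliary function, and once the inf-convolution is written down, its Lipschitz continuity and positivity are routine, while the final rescaling by a factor $3$ is immediate. The main point to get right is the bound $\alpha(y)\ge g(x)-d(x,y)$, which is what converts the nonlinear, self-referential requirement on $\gamma$ into the linear inequality $\gamma(x)<g(x)-\gamma(x)$.
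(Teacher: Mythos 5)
Your argument is correct: the inf-convolution $g(x)=\inf_{y\in X}(\alpha(y)+d(x,y))$ is indeed $1$-Lipschitz (it is finite since $g\le\alpha$, and the standard estimate $g(x)\le g(x')+d(x,x')$ follows by taking infima), it is strictly positive by the continuity and positivity of $\alpha$ exactly as you argue, and with $\gamma=\tfrac13 g$ the chain $\alpha(y)\ge g(x)-d(x,y)>\tfrac23 g(x)>\tfrac13 g(x)=\gamma(x)$ for every $y\in B(x,\gamma(x))$ gives the strict inequality for the infimum (your choice of the factor $\tfrac13$ rather than $\tfrac12$ is the right precaution, since the infimum of the strict bounds could otherwise collapse to equality). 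Note that the paper itself does not prove this statement at all: it is quoted verbatim as Lemma 2.8 of \cite{lee18} and used as an imported tool, so there is no internal proof to compare against. Your proof is therefore a genuine addition rather than a variant; it is self-contained, works on an arbitrary metric space, and in particular does not use the local compactness standing hypothesis of \cite{lee18}, whereas the cited source establishes the lemma in its own setting. The one idea that carries the whole argument --- replacing the self-referential condition on $\gamma$ by the linear inequality $\gamma(x)<g(x)-\gamma(x)$ via the Lipschitz minorant $g$ --- is sound and cleanly executed.
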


\begin{Rem}[Remark 3.2 of \cite{lee18}]
\label{rem:3.2lee}
 For any $\varepsilon \in \mathcal{C}(X)$ there exists $\gamma \in \mathcal{C}(X)$ such that $\gamma(x) < \inf\{\varepsilon(y): y \in B(x, \gamma(x))\}$ for $x \in X$, by Lemma \ref{lem:2.8lee}. Hence, if $d(x, y) < \max\{\gamma(x), \gamma(y)\}$ $(x, y \in X)$, then $d(x, y) < \varepsilon(x)$. Indeed, if $\max\{\gamma(x), \gamma(y)\} = \gamma(x)$, then $d(x, y) < \gamma(x) < \varepsilon(x)$; otherwise, we have $x\in B(y, \gamma(y))$ and so $\gamma(y) < \varepsilon(x)$. Finally we get $d(x, y) < \varepsilon(x)$.
 \end{Rem}

\begin{Lemma}[Lemma 3.3 of \cite{lee18}]
\label{lem:3.3lee}
Let $f$ be an expansive homeomorphism of a locally compact metric space $X$ with an expansive function $e \in \mathcal{C}(X)$ with $e < \alpha_X$, where $\alpha_X\in\mathcal{C}(X)$ such that $\overline{B(x,\alpha_X(x))}$ is compact. For any $x_0 \in X$ and $\lambda \in \mathcal{C}(X)$, there is $N > 0$ such that if $d(x_0,y) \ge \lambda(x_0)$, then $d(f^n(x_0),f^n(y))) \ge e(f^n(x_0))$ for some $|n| < N$.
\end{Lemma}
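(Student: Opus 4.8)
The plan is to run the standard ``expansiveness is uniform on a compact set'' argument; the only point that requires care is that one must first confine all the competing points to a single fixed compact set, and this confinement is precisely what the hypothesis $e<\alpha_X$ provides. I would argue by contradiction. If the conclusion fails, then for every integer $N>0$ there is a point $y_N\in X$ with $d(x_0,y_N)\ge\lambda(x_0)$ and
\[
d(f^n(x_0),f^n(y_N))<e(f^n(x_0))\qquad\text{for all }|n|<N .
\]
The key observation is that taking $n=0$ here already gives $d(x_0,y_N)<e(x_0)<\alpha_X(x_0)$, so every $y_N$ lies in $B(x_0,\alpha_X(x_0))$ and hence in the compact set $K:=\overline{B(x_0,\alpha_X(x_0))}$. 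I regard this as the crux of the proof: local compactness together with $e<\alpha_X$ is exactly what prevents the $y_N$ from running off to infinity, so that a subsequential limit exists at all; without these hypotheses there would be no reason for the argument to close.

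Having trapped the $y_N$ in $K$, I would pass to a subsequence $y_{N_j}\to y_\ast\in K$. On one hand $d(x_0,y_\ast)=\lim_j d(x_0,y_{N_j})\ge\lambda(x_0)>0$, so $y_\ast\neq x_0$. On the other hand, fix any $n\in\Z$; for all $j$ large enough that $N_j>|n|$ one has $d(f^n(x_0),f^n(y_{N_j}))<e(f^n(x_0))$, and since $f$ is a homeomorphism the map $f^n$ is continuous, so letting $j\to\infty$ gives $d(f^n(x_0),f^n(y_\ast))\le e(f^n(x_0))$ for every $n\in\Z$. Thus the whole orbit of $y_\ast$ stays within the expansiveness threshold $e(\cdot)$ of the orbit of $x_0$, and $\mathcal{C}$-expansiveness of $f$ then forces $y_\ast=x_0$, contradicting $y_\ast\neq x_0$ and proving the lemma. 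At this last step one uses expansiveness in its closed form, namely ``$d(f^n(x),f^n(y))\le e(f^n(x))$ for all $n$ implies $x=y$''; this is the standard convention on the expansive function $e$ and can be arranged by shrinking $e$ once at the outset, which does not affect the rest of the argument.

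Equivalently, and a bit more transparently, one can avoid the contradiction: put $V_N:=\{\,y\in X : d(f^n(x_0),f^n(y))<e(f^n(x_0))\ \text{for}\ |n|<N\,\}$, so that $V_N\subset B(x_0,e(x_0))$ by the $n=0$ condition; then the closures $\overline{V_N}$ form a decreasing sequence of compact subsets of $K$, and $\bigcap_N\overline{V_N}=\{x_0\}$ by the same limiting argument together with (closed) expansiveness. Since $\{x_0\}\subset B(x_0,\lambda(x_0))$ is open, compactness yields an $N_0$ with $\overline{V_{N_0}}\subset B(x_0,\lambda(x_0))$, and $N:=N_0$ is the bound we want, because $d(x_0,y)\ge\lambda(x_0)$ then forces $y\notin V_{N_0}$, i.e.\ $d(f^n(x_0),f^n(y))\ge e(f^n(x_0))$ for some $|n|<N_0$. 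In either formulation the single non-routine ingredient is the confinement of the relevant points inside the fixed compact set $K$; everything after that is elementary compactness plus one application of expansiveness.
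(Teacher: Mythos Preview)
The paper does not actually prove this lemma: it is quoted verbatim as ``Lemma~3.3 of \cite{lee18}'' and used as a black box in the proof of Theorem~\ref{mainstatheorem}. So there is no in-paper argument to compare against.

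That said, your argument is correct and is the standard one (and essentially what appears in \cite{lee18}): negate the conclusion to produce a sequence $(y_N)$, use the $n=0$ inequality to trap every $y_N$ inside the compact set $\overline{B(x_0,\alpha_X(x_0))}$, extract a limit $y_\ast$, and then let $N\to\infty$ along the subsequence to see that the full orbit of $y_\ast$ stays within $e(\cdot)$ of the orbit of $x_0$, contradicting expansiveness since $d(x_0,y_\ast)\ge\lambda(x_0)>0$. Your remark about needing the closed form of expansiveness (non-strict inequality) at the limit step is accurate and is exactly the sort of detail that is typically handled by shrinking $e$ at the outset; your alternative formulation via the nested compacts $\overline{V_N}$ with $\bigcap_N\overline{V_N}=\{x_0\}$ is also fine and makes the role of compactness slightly more explicit.
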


In \cite[Theorem 1.1]{lee18} they extend the Walters's stability theorem  to homeomorphisms on locally compact spaces. 
\begin{Thm}
Let $X$ be a locally compact metric space and $f$ be a homeomorphism of $X$ onto itself. If $f$ is $\mathcal{C}$-expansive and has the $\mathcal{C}$-shadowing property then it is $\mathcal{C}$-topologically stable.
\end{Thm}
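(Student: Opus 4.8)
The plan is to adapt Walters's classical argument for compact spaces to the $\mathcal{C}$-setting, using local compactness together with Lemma \ref{lem:2.8lee}, Remark \ref{rem:3.2lee} and Lemma \ref{lem:3.3lee} in place of the usual compactness estimates. Fix an expansive function $e \in \mathcal{C}(X)$ for $f$; shrinking it by Lemma \ref{lem:2.8lee} we may assume $e < \alpha_X$, where $\overline{B(x,\alpha_X(x))}$ is compact for every $x$. Given $\varepsilon \in \mathcal{C}(X)$, first replace it by a smaller $\varepsilon' \in \mathcal{C}(X)$ with $\varepsilon' < e$ that also satisfies the conclusion of Remark \ref{rem:3.2lee} relative to $\varepsilon$; then apply the $\mathcal{C}$-shadowing property to $\varepsilon'$ to obtain $\delta_0 \in \mathcal{C}(X)$ such that every $\delta_0$-pseudo orbit is $\varepsilon'$-traced; finally shrink $\delta_0$ to $\delta \in \mathcal{C}(X)$ so that $d(f(x),g(x)) < \delta(f(x))$ for all $x$ forces the $g$-orbit $\{g^n(x)\}_{n\in\Z}$ to be a $\delta_0$-pseudo orbit of $f$. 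This last step is only continuity bookkeeping, since $d(f(g^n x), g^{n+1} x) = d(f(g^n x), g(g^n x))$.

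Next I would construct the semiconjugacy $h$. For each $x \in X$ the $g$-orbit of $x$ is a $\delta_0$-pseudo orbit, so there is a point, denoted $h(x)$, with $d(f^n(h(x)), g^n(x)) < \varepsilon'(f^n(h(x)))$ for all $n \in \Z$. Uniqueness of $h(x)$ follows from $\mathcal{C}$-expansiveness: if $z$ also $\varepsilon'$-traces the same pseudo orbit, then by the triangle inequality and a Remark \ref{rem:3.2lee}-type estimate $d(f^n(h(x)), f^n(z))$ stays below $e(f^n(h(x)))$ for all $n$, hence $z = h(x)$. Taking $n=0$ gives $d(h(x), x) < \varepsilon'(h(x)) \le \varepsilon(h(x))$, as required. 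The identity $f \circ h = h \circ g$ is then immediate from uniqueness, since both $f(h(x))$ and $h(g(x))$ $\varepsilon'$-trace the $g$-orbit of $g(x)$.

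The main obstacle is the continuity of $h$, and this is exactly where local compactness and Lemma \ref{lem:3.3lee} are essential. I would argue by contradiction: suppose $x_k \to x_0$ but $h(x_k) \not\to h(x_0)$. Using that $\overline{B(h(x_0),\alpha_X(h(x_0)))}$ is compact together with the tracing estimate at $n=0$ (which keeps $h(x_k)$ near $x_k$, hence near $x_0$), the points $h(x_k)$ lie eventually in a fixed compact set, so after passing to a subsequence $h(x_k) \to z$. For each fixed $n$, continuity of $f^n$ and $g^n$ lets the estimate $d(f^n(h(x_k)), g^n(x_k)) < \varepsilon'(f^n(h(x_k)))$ pass to the limit, so $z$ (essentially) $\varepsilon'$-traces the $g$-orbit of $x_0$; if $z \neq h(x_0)$, Lemma \ref{lem:3.3lee} applied to $f$ and the points $h(x_0), z$ produces some $|n| < N$ with $d(f^n(h(x_0)), f^n(z)) \ge e(f^n(h(x_0)))$, contradicting that both trace the same orbit within $\varepsilon' < e$. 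Hence $z = h(x_0)$, contradicting $h(x_k)\not\to h(x_0)$, so $h$ is continuous. The bulk of the technical work is choosing the auxiliary functions $\varepsilon', \delta_0, \delta$ (via Lemma \ref{lem:2.8lee} and Remark \ref{rem:3.2lee}) with enough slack that all these compositions of estimates remain strict and uniform over the non-compact space $X$.
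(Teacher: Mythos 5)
Your outline is correct and, in its main scheme, coincides with the argument the paper relies on: note that the paper itself does not prove this theorem but cites it from \cite{lee18}, and it reproduces exactly this scheme in its own proof of the pointed analogue (Theorem \ref{mainstatheorem}) -- shrink the given $\varepsilon$ to a tracing function $\bar\varepsilon\le\frac14\min\{\varepsilon,\gamma\}$ with $\gamma$ from Lemma \ref{lem:2.8lee} applied to the expansive function $e$, observe that the $g$-orbit of any point is literally a $\delta$-pseudo orbit of $f$ (no extra shrinking of $\delta$ is even needed, since $d(f(g^nx),g^{n+1}x)=d(f(g^nx),g(g^nx))<\delta(f(g^nx))$ is already the pseudo-orbit condition), define $h(x)$ as the unique tracing point, get uniqueness and $f\circ h=h\circ g$ from $\mathcal{C}$-expansiveness via the Remark \ref{rem:3.2lee} estimate, and read off $d(h(x),x)<\varepsilon(h(x))$ at $n=0$. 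Where you genuinely diverge is the continuity of $h$: the paper (following \cite{lee18}) argues directly -- Lemma \ref{lem:3.3lee} supplies a finite window $N$ for the target point $h(x_0)$ and a prescribed $\lambda$, uniform continuity of $g$ on a compact ball controls $d(g^nx_0,g^nx_1)$ for $|n|<N$, and the Remark \ref{rem:3.2lee} estimate then forces $d(h(x_0),h(x_1))<\lambda$ -- whereas you argue by contradiction, extracting a convergent subsequence of $h(x_k)$ inside a compact neighborhood, passing the tracing inequality to the limit, and concluding $z=h(x_0)$ (for which plain $\mathcal{C}$-expansiveness already suffices; your appeal to Lemma \ref{lem:3.3lee} there is legitimate but not needed). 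Both routes use local compactness in an essential way; the paper's direct argument yields an explicit modulus of continuity and sidesteps one point you should be careful about: the tracing bound $d(h(x_k),x_k)<\bar\varepsilon(h(x_k))$ is evaluated at the unknown point $h(x_k)$, so to place the $h(x_k)$ in a \emph{fixed} compact ball you must first convert it, via a Remark \ref{rem:3.2lee}-type step (choosing $\bar\varepsilon$ below a $\gamma$ built from $\min\{\varepsilon,\alpha_X\}$), into a bound by a function evaluated at $x_k$; with that slack built in, as you indicate in your final sentence, your proof closes.
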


\begin{Rem}
The $\mathcal{C}$-shadowing implies the POTP considering $\varepsilon \in \mathcal{C}(X)$ and  $\delta \in \mathcal{C}(X)$ as constants functions.
\end{Rem}

Now we can give a definition for spaces topologically $GH$-stable in a local pointed sense.

\begin{Def}
A homeomorphism $f: (X,x) \rightarrow (X,f(x))$ of a proper metric space $X$ is topologically $pGH$-stable if for every $\varepsilon\in\mathcal{C}(X)$,  there is $\delta\in\mathcal{C}(X)$ such that for every homeomorphism $g: Y \rightarrow Y$ of metric space $Y$ satisfying  $d^p_{GH^{0}}(f, g)<\delta(f(x))$ there is a continuous map $h\in App_\varepsilon(Y,X)$ such that $f \circ h=h \circ g$.
\end{Def}

\begin{Thm}
\label{mainstatheorem}
Let $(X,x)$ be a proper pointed metric space and $f$ be a homeomorphism of $X$ onto itself. If $f$ is $\mathcal{C}$-expansive and has the $\mathcal{C}$-shadowing property then it is topologically $pGH$-stable.
\end{Thm}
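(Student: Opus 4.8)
### Proof strategy

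The plan is to reduce Theorem~\ref{mainstatheorem} to the locally compact Walters-type stability theorem of \cite{lee18} (the $\mathcal{C}$-topological stability theorem quoted just above), by converting a hypothesis of smallness in the $d^p_{GH^0}$-distance into a hypothesis of $C^0$-smallness of a perturbation \emph{living on the same space} $X$. Since $X$ is proper, it is locally compact, so \cite[Theorem~1.1]{lee18} applies to $f$. The key observation is that if $d^p_{GH^0}((X,x,f),(Y,y,g))<\delta(f(x))$ is very small, then on a large ball $B(x,1/\varepsilon_0)$ there is an $\varepsilon_0$-pointed approximation $i$ together with an $\varepsilon_0$-inverse $j\in \operatorname{App}_{4\varepsilon_0}$, and the composition $\tilde{g}:= j\circ g\circ i$ is, on that ball, a homeomorphism of a subset of $X$ that is $C^0$-close to $f$: indeed $d(j\circ g\circ i(q), f(q))\le d(j\circ g\circ i(q), j\circ i\circ f(q)) + d(j\circ i\circ f(q), f(q))$, and the first term is controlled by $\operatorname{dis}(j)$ applied to $d_{0,\varepsilon_0,p}(i\circ f, g\circ i)<\varepsilon_0$, while the second is the $\varepsilon_0$-inverse estimate $\le 3\varepsilon_0$. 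Using the properness of $X$ and the extension remark made earlier (every pointed approximation extends, up to a constant factor, to a map defined on all of $X$), one upgrades $\tilde g$ to a genuine homeomorphism of $X$, or at least to a $\delta'$-perturbation of $f$ in the sense $d(f(p),\tilde g(p))<\delta'(f(p))$ for the continuous function $\delta'$ produced from $\delta$ and the local constants.

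Then I would feed this $\tilde g$ into the $\mathcal{C}$-topological stability of $f$: given the target $\varepsilon\in\mathcal{C}(X)$, choose first the $\gamma\in\mathcal{C}(X)$ of Lemma~\ref{lem:2.8lee}/Remark~\ref{rem:3.2lee} associated with $\varepsilon$, then let $\delta_0\in\mathcal{C}(X)$ be the function given by \cite[Theorem~1.1]{lee18} for the target $\gamma$, and finally choose $\delta\in\mathcal{C}(X)$ (and the threshold for $d^p_{GH^0}$) small enough that the perturbation $\tilde g = j\circ g\circ i$ constructed above satisfies $d(f(p),\tilde g(p))<\delta_0(f(p))$ on the relevant ball. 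This yields a continuous $\tilde h:X\to X$ with $\tilde h\circ \tilde g = f\circ \tilde h$ and $d(\tilde h(p),p)<\gamma(\tilde h(p))$. Setting $h:= \tilde h\circ j : Y\to X$, one checks $f\circ h = f\circ\tilde h\circ j = \tilde h\circ \tilde g\circ j = \tilde h\circ j\circ g\circ(i\circ j) \approx \tilde h\circ j\circ g = h\circ g$ up to the controlled error coming from $i\circ j$ being $3\varepsilon_0$-close to the identity; a final approximation/limit argument (or absorbing the error into the choice of constants, exactly as in the proof of part 3 of the previous theorem, where properness gives uniform convergence on compact sets of the extended approximations) promotes the approximate conjugacy $f\circ h\approx h\circ g$ to an exact one. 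That $h\in \operatorname{App}_\varepsilon(Y,X)$ follows because $j$ is a $4\varepsilon_0$-approximation, $\tilde h$ is $\gamma$-close to the identity, and Remark~\ref{rem:3.2lee} converts the $\gamma$-closeness into $\varepsilon$-closeness; composing a GH-approximation with a map uniformly close to the identity keeps it a GH-approximation with a slightly worse (but still $\le\varepsilon$, after shrinking) constant.

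The main obstacle I anticipate is the bookkeeping needed to make the three-way chase $f\circ h = h\circ g$ \emph{exact} rather than merely approximate, since $i\circ j$ and $j\circ i$ are only $3\varepsilon_0$-close to the respective identities and $\tilde g$ is only a perturbation on a ball, not a global homeomorphism of $X$. Following the pattern already used in the excerpt (the proof of item~3 of the pointed version of Theorem~\ref{Thm:AM1}, and the extension-of-approximations remark relying on properness: ``closed balls are compact''), I would take a sequence $\varepsilon_0 = 1/n \to 0$, obtain maps $h_n$ with errors $O(1/n)$, extend everything to all of $X$ using properness, and extract a subsequence converging uniformly on compact subsets to a continuous $h$ satisfying $f\circ h = h\circ g$ exactly, with $d(h(p),p)$ still controlled by $\varepsilon$ via Remark~\ref{rem:3.2lee}. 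A secondary technical point is verifying that $\tilde g$ inherits enough regularity (being injective on the ball, or at least that its relevant iterates behave) to legitimately invoke \cite[Theorem~1.1]{lee18}; here one uses that $g$ is a homeomorphism and that $i,j$ are near-isometries on large balls, so $\tilde g$ is a near-isometry onto its image, which suffices after the extension step. Everything else is the routine propagation of the continuous ``gauge'' functions $\varepsilon\mapsto\gamma\mapsto\delta_0\mapsto\delta$ through Lemma~\ref{lem:2.8lee}, Remark~\ref{rem:3.2lee}, and Lemma~\ref{lem:3.3lee}.
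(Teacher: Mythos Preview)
Your reduction to \cite[Theorem~1.1]{lee18} has two genuine gaps, and the paper's proof avoids both by taking a different route.

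First, the map $\tilde g := j\circ g\circ i$ is almost never a homeomorphism of $X$: the approximations $i,j$ need be neither continuous, nor injective, nor surjective, and no ``extension by properness'' repairs this. The theorem you want to invoke requires the perturbation to be a homeomorphism, so it does not apply to $\tilde g$. You flag this as a ``secondary technical point'', but it is in fact the central obstruction.

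Second, your limiting argument cannot promote the approximate conjugacy $f\circ h \approx h\circ g$ to an exact one. You propose letting $\varepsilon_0 = 1/n \to 0$, but once $g$ is given you only know $d^p_{GH^0}(f,g) < \delta(f(x))$ for the \emph{fixed} threshold $\delta$ you chose; you have no approximations $i_n,j_n$ at arbitrarily small scales, so there is no sequence $h_n$ along which to pass to a limit. The error $d(i\circ j,\mathrm{Id}_Y)\le 3\varepsilon_0$ is stuck at order $\delta$ and never disappears.

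The paper sidesteps both issues by \emph{not} pulling $g$ back to $X$. It uses directly that for each $q\in B(y,1/\delta)$ the sequence $p_n := j(g^n(q))$ is a $\delta$-pseudo orbit of $f$ in $X$ (this needs only the inequality $d(j\circ g, f\circ j)<\delta$ coming from the $d^p_{GH^0}$ hypothesis, not that $j\circ g\circ i$ be a homeomorphism). The $\mathcal{C}$-shadowing property then produces a point $h(q)$ that $\bar\varepsilon$-traces this pseudo orbit, and $\mathcal{C}$-expansivity makes it unique. The \emph{exact} identity $f\circ h = h\circ g$ follows because both $h(g(q))$ and $f(h(q))$ trace the shifted pseudo orbit $(j(g^{n+1}(q)))_n$, hence coincide by uniqueness. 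That $h\in\operatorname{App}_\varepsilon$ comes from comparing $h$ with $j$, and continuity of $h$ from Lemma~\ref{lem:3.3lee}. In short: rather than quoting the stability theorem of \cite{lee18}, the paper reruns its shadowing-plus-uniqueness argument with the pseudo orbits supplied by $j\circ g^n$.
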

\begin{proof}
Let $f:X\to X$ be expansive with an $\mathcal{C}$-expansive function $e \in \mathcal{C}(X)$, 
let  $\gamma \in \mathcal{C}(X)$ be such that $\gamma(z) < \inf\{e(y)| y \in B(z,\gamma(z))\}$ for all $z\in X$.

Fix $\varepsilon > 0$ and take $0 < \bar{\varepsilon} < \frac 14\min\{\varepsilon, \gamma\}$. 

For this $ \bar{\varepsilon}$  we choose $\delta$ from the POTP. We can assume that $\delta < \bar\varepsilon$.

First we claim that any $\delta$-pseudo orbit is $\bar\varepsilon$-traced by only one point in $X$. Indeed, we suppose that there are 
$p,q \in X$ which $\bar\varepsilon$-trace a $\delta$-pseudo orbit $(p_n)_{n\in \mathbb Z}$. Then we get
$$
\begin{aligned}
d\left(f^{n}(p), f^{n}(q)\right) & \leq d\left(f^{n}(p), p_{n}\right)+d\left(p_{n}, f^{n}(q)\right)<\bar\varepsilon\left(f^{n}(p)\right)+\bar\varepsilon\left(f^{n}(q)\right) \\
&<\frac{\gamma\left(f^{n}(p)\right)}{4}+\frac{\gamma\left(f^{n}(q)\right)}{4}<\max \left\{\gamma\left(f^{n}(p)\right), \gamma\left(f^{n}(q)\right)\right\} \\
&<e\left(f^{n}(p)\right) \quad \text { by Remark \ref{rem:3.2lee}}
\end{aligned}
$$
for all $n \in \mathbb{Z}$. So, the expansivity implies $p=q$.

Now, let $g$ be a homeomorphism of $Y$ with $d^p_{GH^0}(f, g)<\delta$ for all $x \in X$.

Then, there exists $j\in App_\delta((Y,y,g(y)),(X,x,f(x)))$ such that $d_{0,\delta,p} (j \circ g, f \circ j) < \delta$.

Take $q \in B(y,1/\delta)\subset Y$ and consider the sequence $(p_n)_{n\in\mathbb Z}$ defined by $p_n = j(g^n(q))$ for $n \in\mathbb Z$, and choose $\delta(g^n(y))$ such that $p_n\in B(f(x),\frac1\delta)$. Since
$$
\begin{aligned}
 d^{B(f(x),1/\delta)}(p_{n+1}, f(p_n)) 
 &= d^{B(f(x),1/\delta)}(j [g^{n+1}(q)], f (j[g^n(q)])) \\
 &= d^{B(f(x),1/\delta)} (j \circ g(g^n(q)), f \circ j(g^n(q))) < \delta
\end{aligned}
$$
for all $n\in \mathbb N$.

Then $(j\circ g^{n}(q))_{n \in \mathbb{Z}}$ is a $\delta$-pseudo orbit of $f$ for each $q \in B(y,1/\delta)\subset Y$. Hence we can define a map $h: B(y,1/\delta) \rightarrow X$ by $h(q)=$ the unique shadowing point of the $\delta$-pseudo orbit $(j\circ g^{n}(q))_{n \in \mathbb{Z}}$. So, we have
$$
d^{B(f(x),1/\delta)}(\left(f^{n}(h(q)), j(g^{n}(q))\right)<\bar \varepsilon\left(f^{n}(h(q))\right) 
$$
for all  $q\in B(y,1/\delta)\subset Y$ and $n\in\mathbb Z$.

Taking $n=0$ above we get $d^{B(f(x),1/\delta)}\left(h(q), j(q)\right)<\bar \varepsilon$ for all  $q\in B(y,1/\delta)\subset Y$.
Then $d^{B(f(x),1/\delta)}_{0}(h,j)<\varepsilon$.

Since $j\in App_\delta$, we have that for all $p\in B(f(x),1/\delta)\subset X$ we have that
$$
d ( h(B(y,1/\delta)),p) \le d^{B(f(x),1/\delta)}_{0}(h,j)+d^{B(f(x),1/\delta)}(j(B(y,1/\delta)),p)<\bar\varepsilon+\delta<\varepsilon,
$$
and for all $p_1,p_2\in B(y,1/\delta)$

$$
\begin{aligned}
&\left|d^X (h(p_1), h(p_2))-d^Y(p_1, p_2)\right|\\
&\leq \left|d^{X} (h(p_1), h(p_2))-d^{X}(j(p_1), j(p_2))\right|
+\left|d^{X}(j(p_1), j(p_2))-d^{Y}(p_1, p_2)\right| \\ 
&\leq \left|d^{X}\left(h(p_1), h\left(p_2\right)\right)-d^{X}\left(h(p_1), j\left(p_2\right)\right)\right|+
\left|d^{X}\left(h(p_1), j\left(p_2\right)\right)-d^{X}\left(j(p_1), j\left(p_2\right)\right)\right|+\delta \\ 
&\leq d^{X}\left(h\left(p_2\right), j\left(p_2\right)\right)+d^{X}(h(p_1), j(p_1))+\delta \\
&< 2 \bar{\varepsilon}+\delta < \frac{\varepsilon}{4}+\frac{\varepsilon}{8} < \varepsilon.
\end{aligned}
$$
Then $dis(h)\big|_{B(y,1/\delta)}<\varepsilon$, and this implies $dis(h)\big|_{B(y,1/\varepsilon)}<\varepsilon$, that is, $h\in App_\varepsilon$.

On the other hand, since
$$
d(f^n(h(g(q))), j(g^n(g(q)))<\bar\varepsilon
$$
and
$$
d\left(f^{n}(f(h(q))), j(g^{n+1}(q)\right)
=d\left(f^{n+1}(h(x)), j(g^{n+1}(q))\right)<\varepsilon\left(f^{n+1}(h(q))\right)
$$
for all $n \in \mathbb{Z}$, we know that two points $h(g(q))$ and $f(h(q))$  $\bar\varepsilon$-trace the $\delta$-pseudo orbit 
$(g^{n+1}(q))_{n \in \mathbb{Z}}$. By the uniqueness, we have $f \circ h=h \circ g$.

Now, we show $h$ is continuous for each $x_0 \in B(f(x),1/\delta)\subset X$.

Indeed, fix $\lambda > 0$. By lemma \ref{lem:3.3lee}, there is $N \in \mathbb N^+$  such that for any $x_1\in X$ if
$$
d(f^n(h(x_0)),f^n(h(x_1))) \le e(f^n(h(x_0)))
$$
for all $|n|<N$, then $d(h(x_0),h(x_1))<\lambda$.

Because $g$ is continuous and $\overline { B(y,1/\delta) }\subset Y$ is compact we have that 
$g$ is uniformly continuous on $\overline { B(y,1/\delta)} $. So, for $\eta>0$ such that 
$d(x_0,x_1) < \eta$ then
 $$
 d(g^n(x_0),g^n(x_1) < \frac{\gamma(h(g^n(x_0)))}{4}
 $$ 
 for all $|n|< N$. 

Then if $d(x_0, x_1) < \eta$ we have 
$$
\begin{aligned}
d^{X}&\left(f^{n}(h(x_0)), f^{n}\left(h\left(x_1\right)\right)\right)\\
=& d^{X}\left(h\left(g^{n}(x_0)\right), h\left(g^{n}\left(x_1\right)\right)\right) \\
\leq & d^{X}\left(h\left(g^{n}(x_0)\right), j\left(g^{n}(x_0)\right)\right)+
 d^{X}\left(j\left(g^{n}(x_0)\right), j\left(g^{n}\left(x_1\right)\right)\right)\\
&+ d^{X}\left(h\left(g^{n}\left(x_0\right)\right), j\left(g^{n}\left(x_1\right)\right)\right) \\
\leq &  \bar{\varepsilon}(hg^n(x_0))+\frac{\gamma(hg^n(x_0))}{4}+  \bar{\varepsilon}(hg^n(x_1))\\
< &  \frac{\gamma(hg^n(x_0))}{4}+\frac{\gamma(hg^n(x_0))}{4}+\frac{\gamma(hg^n(x_0))}{4}\\
< & \max\{ \gamma(f^n(h(x_0))),\gamma(f^n(h(x_1)))\}
< e (f^n(h(x_0))) \quad \text { by Remark \ref{rem:3.2lee}}
\end{aligned}
$$
$\forall |n| < N$, 
and so $d\left(h(x_0), h\left(x_1\right)\right) <\lambda$ by the choice of $N$. Then, $h$ is continuous.
\end{proof}

\textbf{Note.} In general, if $X_n$ is closed for all $n$, then $X_n\overset{GH}{\longrightarrow}X$ and $X$ be non-closed. However, $d_{GH}(X,\overline{X})=0$. 

\begin{exam}
Let $T_A:\mathbb{T}^2\to\mathbb{T}^2$ be the hyperbolic torus automorphism induced by the matrix
$$
A=\left(\begin{array}{cc}
    2 & 1 \\
    1 & 1
\end{array}
\right)
$$
on the torus $\mathbb{T}^2=\mathbb{S}^1\times\mathbb{S}^1$. It is knows that $T_A$ has the POTP and $T_A$ is expansive. Then, $T_A$ is $GH$-stable. In particular, $T_A$ is $pGH$-stable.

\end{exam}

\begin{exam}
\label{excherry}
Let $\mathcal{X}$ be a $C^{\infty}$ vector field which induces a Cherry flow $\varphi_t$ on
$\mathbb{T}^2$,  the two-dimensional torus (\cite{boyd85}, \cite{mepa82}).
\end{exam}

\begin{figure}
    \centering
    \includegraphics[scale=0.7]{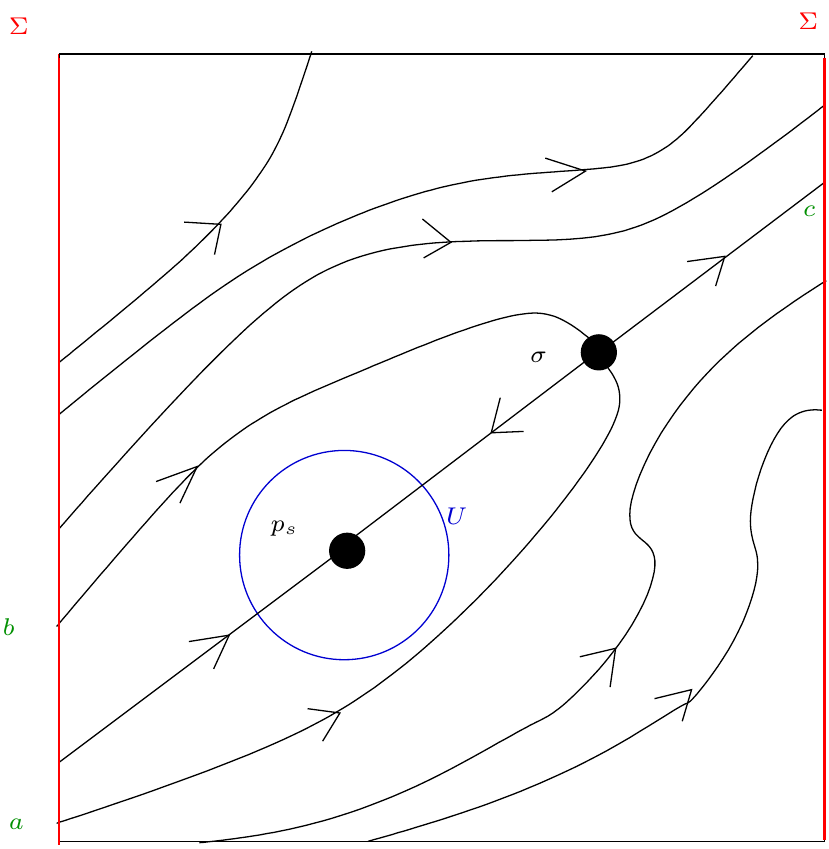}
    \caption{Cherry flow}
    \label{cherry1}
\end{figure}

The vector field $\mathcal{X}$ of the Cherry flow $\varphi_t$ has the following properties:

\begin{enumerate}
    \item $\mathcal{X}$ has two hyperbolic singularities, a saddle $\sigma$ and a sink  $p_s$.
    \item $\mathcal{X}$ is transverse to a meridian circle $\Sigma$ in $\mathbb{T}^2$.
    \item One of the two orbits in $W^u(\sigma)\setminus{\sigma}$ intersect $\Sigma$ in a first point $c$.  
    \item There is an open interval $(a,b)\subset\Sigma$ such that the positive orbit of $y\in (a,b)$ goes directly to $p_s$. 
    \item A Poincare map $g:\Sigma\setminus [a,b]\to\Sigma$ associated to $\mathcal{X}$ is expanding.
    \item The map $g$ in (5.) is extended to the whole $\Sigma$ defining $g(y)=c$ for every $y\in [a,b]$. Moreover, $g$ has irrational rotation number.
\end{enumerate}

The proof of the following lemma can be found in \cite{mepa82}.

\begin{Lemma}
\label{Cherrylemma}
If $\varphi_t$ is a Cherry flow, then 
\begin{enumerate}
    \item $\varphi_t$ has no periodic orbits.
    \item $\Lambda=\mathbb{T}^2\setminus W^s(p_s)$ is a transitive set of $\varphi_t$. 
\end{enumerate}
\end{Lemma}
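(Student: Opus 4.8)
The plan is to reduce both assertions to the one-dimensional dynamics of the extended Poincaré return map $g$ on the meridian $\Sigma$. By properties (5)--(6), $g$ is a monotone degree-one circle map with irrational rotation number $\rho$ which is constant ($\equiv c$) on the interval $[a,b]$; a map of this type has a wandering interval, is semiconjugate but not conjugate to the rigid rotation $R_\rho$, and has a unique minimal set $K\subset\Sigma$ which is a Cantor set whose complementary gaps are $(a,b)$ and its $g$-preimages, with $g|_K$ minimal. I take this circle-map dictionary as known and build everything on top of it.

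For the first assertion I would argue by contradiction. Let $\gamma$ be a periodic orbit; it is not an equilibrium (the only ones are $\sigma$ and $p_s$), and since $\mathcal X$ is transverse to $\Sigma$, the set $\gamma\cap\Sigma$ is finite, possibly empty. If $\gamma$ met $(a,b)$ then by (4) the orbit through that point would converge to $p_s$, which is impossible for a periodic orbit; hence if $\gamma\cap\Sigma\ne\emptyset$ its points lie in $\Sigma\setminus(a,b)$ and are cyclically permuted by $g$, so they are periodic points of $g$, contradicting that $\rho$ is irrational. If instead $\gamma\cap\Sigma=\emptyset$, then $\gamma$ lies in the open annulus $\mathbb{T}^2\setminus\Sigma$: a contractible $\gamma$ bounds a disc, and a Poincaré--Bendixson / index count (the indices of the enclosed equilibria must sum to $1$, while only a saddle of index $-1$ and a sink of index $+1$ are available) is incompatible with the global disposition of the separatrices of $\sigma$; a non-contractible $\gamma$, being disjoint from the meridian $\Sigma$, is isotopic to $\Sigma$ and cobounds an annulus that $\varphi_t$ enters transversally through $\Sigma$ and cannot leave, again impossible. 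In every case we get a contradiction.

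For the second assertion, the first step is routine: $W^s(p_s)$ is open and $\varphi_t$-invariant (it is the basin of a hyperbolic sink), so $\Lambda=\mathbb{T}^2\setminus W^s(p_s)$ is compact and invariant, and it is nonempty since $\sigma\in\Lambda$. Next I would show $K\subset\Lambda$: the full $g$-orbit of any $y\in K$ stays in $K\subset\Sigma\setminus(a,b)$, so the $\varphi_t$-orbit of $y$ crosses $\Sigma$ infinitely often and stays at positive distance from $p_s$, hence $y\notin W^s(p_s)$. Fixing $y_0\in K$, minimality of $g|_K$ gives $\overline{\{g^n(y_0):n\in\mathbb{Z}\}}=K$, so $\overline{\varphi_{\mathbb R}(y_0)}\supseteq K$ and, by invariance and closedness, $\overline{\varphi_{\mathbb R}(y_0)}\supseteq S:=\overline{\varphi_{\mathbb R}(K)}\subseteq\Lambda$. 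Then I would prove $S=\Lambda$: for $z\in\Lambda$ whose orbit meets $\Sigma$ at a point $y$, the forward $g$-iterates of $y$ cannot enter $(a,b)$ (else $z\in W^s(p_s)$), so $y\in\Sigma\setminus\bigcup_{n\ge0}g^{-n}((a,b))$, which equals $K$, whence $z\in\varphi_{\mathbb R}(K)\subseteq S$; for $z\in\Lambda$ whose orbit never meets $\Sigma$, it stays in the annulus $\mathbb{T}^2\setminus\Sigma$ in both time directions, so its $\alpha$- and $\omega$-limits are compact invariant sets containing no periodic orbit (first assertion) and different from $\{p_s\}$, hence equal to $\{\sigma\}$; thus $z\in W^s(\sigma)\cup W^u(\sigma)\cup\{\sigma\}$, and using that the branch of $W^u(\sigma)$ reaching $\Sigma$ does so at $c=g([a,b])\in K$ and that the backward ends of the stable separatrices of $\sigma$ reach $\Sigma$ at points of $K$ by the same no-escape-into-$(a,b)$ reasoning, every such $z$ again lies in $\overline{\varphi_{\mathbb R}(K)}=S$. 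Combining the two inclusions, $\overline{\varphi_{\mathbb R}(y_0)}=\Lambda$, so $\Lambda$ is transitive. I expect the genuine obstacle to be precisely the orbits that do not cross $\Sigma$ — the index / Poincaré--Bendixson bookkeeping in the annulus for the first assertion, and the identification of $W^s(\sigma)$, $W^u(\sigma)$ and their closures for the second — which is where the two-dimensional topology of torus flows without periodic orbits genuinely enters; a complete write-up would either reproduce or quote the analysis of \cite{mepa82}.
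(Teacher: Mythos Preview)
The paper does not prove this lemma at all: it states the result and refers the reader to de Melo--Palis \cite{mepa82}. Your proposal therefore goes well beyond what the paper does; you are essentially reconstructing the argument that the paper outsources.

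Your reduction to the extended return map $g$ and the use of the irrational rotation number to exclude periodic points on $\Sigma$ is the standard and correct route for part (1), and your identification of the minimal Cantor set $K\subset\Sigma$ as the backbone of $\Lambda$ is the right organizing idea for part (2). That said, two points in the sketch are genuinely incomplete, and you flagged both of them yourself. First, in part (1) the case $\gamma\cap\Sigma=\emptyset$ is not dispatched: the index count alone does not exclude a contractible $\gamma$ enclosing only the sink (index $+1$), and your phrase ``incompatible with the global disposition of the separatrices'' is doing unspecified work; the clean way out is to observe that orbits through $(a,b)\subset\Sigma$ must reach $p_s$ and hence would have to cross $\gamma$. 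For the non-contractible case, ``enters and cannot leave'' is not yet a contradiction without an additional Poincar\'e--Bendixson step in the resulting annulus. Second, in part (2) you treat $g$ as if it were invertible (writing $\{g^n(y_0):n\in\mathbb Z\}$), but the extended map is not a homeomorphism; one should work with forward orbits and $\omega$-limit sets, and the description of $\Sigma\setminus K$ as $\bigcup_{n\ge0}g^{-n}((a,b))$ requires justification from the semiconjugacy. Your concluding remark that a full write-up would reproduce or quote \cite{mepa82} is exactly right, and is precisely what the paper itself does.
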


To continue, define $f_0:\mathbb{T}^2\to\mathbb{T}^2$ by $f_0(x)=\varphi_1(x)$. By Lemma \ref{Cherrylemma}, $f_0 $ has the $\mathcal{C}$-shadowing property, and property (5.) implies $g$ expanding, but $f_0$ is not. However $f_0$ is $pGH$-stable by property (1.) and Lemma \ref{Cherrylemma}.  
Now, take $\epsilon>0$ small and consider an open neighborhood $U$ of the sink $p$ such that $U$ not intersects $\Sigma$, diameter of $U$ is less than $\epsilon$ and $\bigcap_{n\geq 1}\left(f_0\right)^n(U)=\{p_s\}$. For each $n\in\mathbb{N}$, $\mathbb{T}^2$ is pulled in $U$ as it is shows in Figure \ref{deftorus}. The deformation generates an other metric spaces $X_n$, composed by a torus, a sphere and a circular cylinder of length $n$ and radio $\epsilon/n$ connecting them, such as in Figure \ref{deftorus}. Let $i_n:\mathbb{T}^2\to X_n$ be a homeomorphism which produces the deformation from $\mathbb{T}^2$ to $X_n$, with $i_n(p_s)$ in the sphere. 
Now, take $f_n:X_n\to X_n$ given by $f_n=i_n\circ f_0\circ i_n^{-1}$.
Observe that $(X_n,p_n)\xrightarrow{pGH} 
(X,p)$, where $(X,p)$ can be: a torus $(\mathbb{T}^2\setminus\{p_s\},p)$, $(S^2\setminus\{q_u\},p)$, or $(\mathbb{R},p)$, depending the ubication of points $p_n$ \cite[example 6.3]{fukaya1990}.  Now, we take $\tilde{X}=\overline{X}$. So,  $(X_n,p_n,f_n)\xrightarrow{d_{GH^0}^p} (\tilde{X},p,f)$. 

\begin{figure}[H]
    \centering
    \includegraphics[scale=0.65]{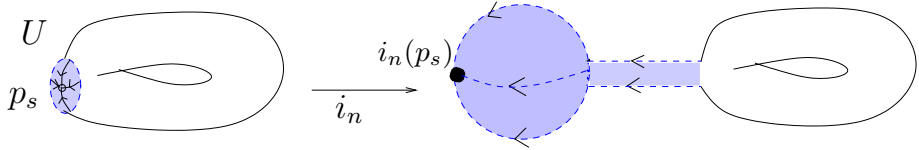}
\caption{Deforming torus}
    \label{deftorus}
\end{figure}

\begin{itemize}
    \item $f|_{\mathbb{T}^2}=f_0$.
    \item $f|_{\mathbb{R}}$ is an homeomorphism and has no fixed points.
    \item $f|_{S^2}$ has two singularities, a sink $q_s$ and a source $q_u$.
    \item If $p\in \mathbb{R}$, then $\alpha(p)=p_s$ and $\omega(p)=q_u$. Moreover, there exists a sequence $p_n\in X_n$ such that $(X_n,p_n)\xrightarrow{d^p_{GH^0}}(\mathbb{R
},p)$.
    \item If $p\in S^2\setminus\{q_u,q_s\}$, then $\alpha(p)=q_u$ and $\omega(p)=q_s$.
\end{itemize}

\begin{figure}[H]
    \centering
    \includegraphics[scale=0.8]{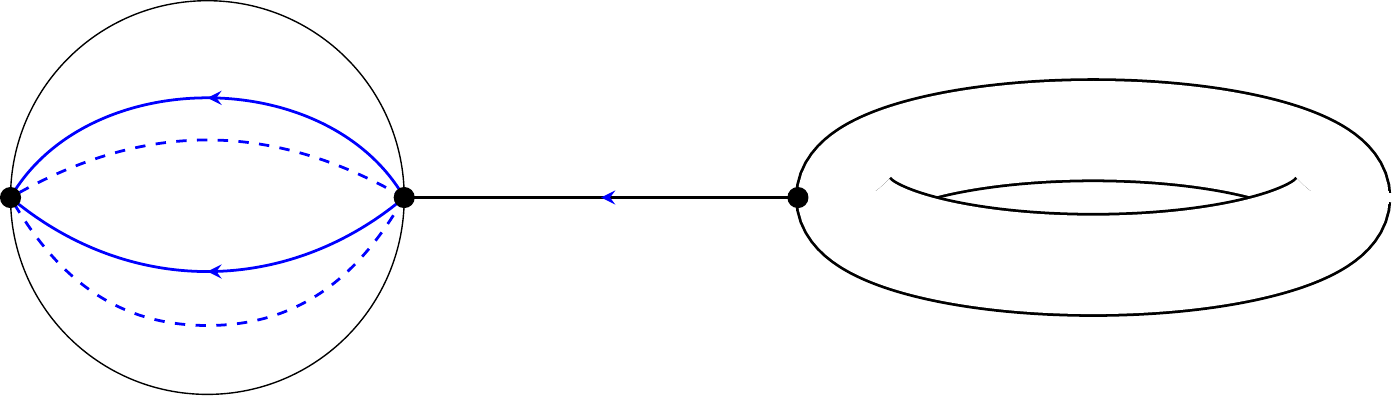}
    \caption{Dynamic in space $X$}
    \label{Cherry3}
\end{figure}

 In this way, $f$ is $pGH$-stable. 

\begin{exam}
Let $g:S^1\to S^1$ given by $g(\theta)=2\theta$, with $S^1=\mathbb{R}/2\pi\mathbb{Z}$ and $d(\theta_1,\theta_2)$ is a minimal arclength connecting $\theta_1$ and $\theta_2$ over $2\pi$. The homeomorphism $g$ is expansive and has the POTP \cite{devaney1989introduction}, so $g$ is $pGH$-stable. For each $n\in\mathbb{N}$, consider the homeomorphism $i_n:S^1\to Y_n$ which produces the deformation from $S^1$ to $Y_n$ such that a ball centered in $\theta=\pi$ and radio $1/4n$ is transformed such as is described in Figure \ref{circle1} in blue color section, where the neck has fixed length.  Define $f_n:Y_n\to Y_n$ by $f_n=i_n\circ g\circ i_n^{-1}$. From Theorem 2.3.6 in \cite{sakai96}  follows that for all $n$, $f_n$ has the POTP. On the other hand, the expansiveness is directly guaranteed by definition of $f_n$. Therefore, $f_n$ is $pGH$-stable for all $n$. In this case, $Y_n\xrightarrow{GH} Y$, where $Y$ is the union between two circles and a line connect them. However, the sequence $\{f_n\}_n$  no convergence to a map $f:Y\to Y$ in $GH^0$-sense neither $pGH^0$-sense. Indeed, taking points $p_n,q_n\in Y_n$ such as Figure \ref{circle1}, then $p_n\overset{d_{GH}^p}{\longrightarrow} x$ and $q_n\overset{d^p_{GH}}{\longrightarrow} x$ for some $x\in Y$ in the line. But $f(x)$ is not well defined. This example show us that in certain phenomena it is convenient to consider an other way to measure the distance between metric spaces and as consequence, distance between maps.

\begin{figure}[H]
    \centering
    \includegraphics{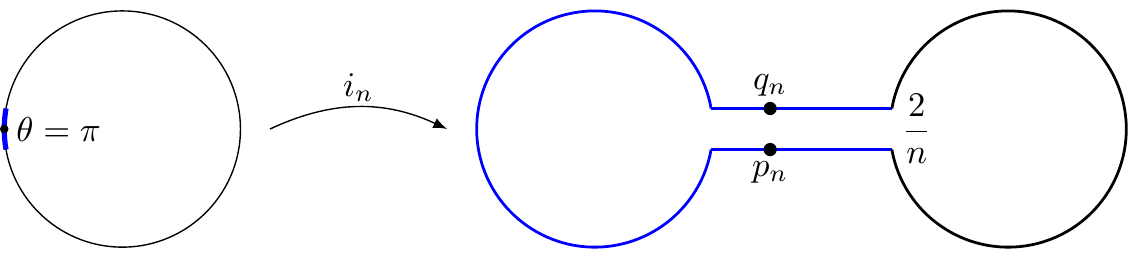}
    \caption{Deforming $S^1$}
    \label{circle1}
\end{figure}

\end{exam}

\section{Appendix}

\begin{Prop}[triangle inequality]\label{p:tinq1}
Let $\left(X_{m}, x_{m}\right)$ be pointed metric spaces for $m=1,2,3$. 
If  $d^p_{GH}\left((X_{m}, x_{m}),(X_{n}, x_{n})\right) \leq 1 / 2$ for $(m,n)=(1,2)$ and $(2,3)$,  then
\begin{align*}
d^p_{GH}\left((X_{1}, x_{1}),(X_{3}, x_{3})\right) \leq 
2\left[d^p_{GH}\left((X_{1}, x_{1}),(X_{2}, x_{2})\right)+
d^p_{GH}\left((X_{2}, x_{2}),(X_{3}, x_{3})\right)\right]
\end{align*}
\end{Prop}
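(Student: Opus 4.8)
The plan is to prove transitivity by \emph{composing} near-optimal pointed Gromov--Hausdorff approximations. Fix $\delta>0$. By definition of $d^p_{GH}$ there are scales $\varepsilon_{12}<d^p_{GH}((X_1,x_1),(X_2,x_2))+\delta$ and $\varepsilon_{23}<d^p_{GH}((X_2,x_2),(X_3,x_3))+\delta$, so $\varepsilon_{12},\varepsilon_{23}<\tfrac12+\delta$ by the standing hypothesis, together with approximations $i_{12}\in\operatorname{Appr}_{\varepsilon_{12}}((X_1,x_1),(X_2,x_2))$, $i_{21}\in\operatorname{Appr}_{\varepsilon_{12}}((X_2,x_2),(X_1,x_1))$, $i_{23}\in\operatorname{Appr}_{\varepsilon_{23}}((X_2,x_2),(X_3,x_3))$ and $i_{32}\in\operatorname{Appr}_{\varepsilon_{23}}((X_3,x_3),(X_2,x_2))$. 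Set $\varepsilon_{13}:=2(\varepsilon_{12}+\varepsilon_{23})$, $i_{13}:=i_{23}\circ i_{12}$ and $i_{31}:=i_{21}\circ i_{32}$. The whole proposition reduces to the claim that $i_{13}\in\operatorname{Appr}_{\varepsilon_{13}}((X_1,x_1),(X_3,x_3))$ and $i_{31}\in\operatorname{Appr}_{\varepsilon_{13}}((X_3,x_3),(X_1,x_1))$: granting this, $d^p_{GH}((X_1,x_1),(X_3,x_3))\le\varepsilon_{13}<2\bigl(d^p_{GH}((X_1,x_1),(X_2,x_2))+d^p_{GH}((X_2,x_2),(X_3,x_3))\bigr)+4\delta$, and letting $\delta\to0$ finishes.

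I verify the three defining conditions for $i_{13}$, the case of $i_{31}$ being symmetric under $1\leftrightarrow3$ and $\varepsilon_{12}\leftrightarrow\varepsilon_{23}$. Condition~1 is immediate from $i_{13}(x_1)=i_{23}(i_{12}(x_1))=i_{23}(x_2)=x_3$. For condition~2, note $\varepsilon_{13}\ge\varepsilon_{12}$, so $B^{X_1}(x_1,1/\varepsilon_{13})\subset B^{X_1}(x_1,1/\varepsilon_{12})$; by the distortion bound $\operatorname{dis}(i_{12})|_{B^{X_1}(x_1,1/\varepsilon_{12})}<\varepsilon_{12}$ together with $i_{12}(x_1)=x_2$, the map $i_{12}$ sends $B^{X_1}(x_1,1/\varepsilon_{13})$ into $B^{X_2}(x_2,1/\varepsilon_{13}+\varepsilon_{12})\subset B^{X_2}(x_2,1/\varepsilon_{23})$ --- the inequality $1/\varepsilon_{13}+\varepsilon_{12}\le1/\varepsilon_{23}$ being the single point where the hypothesis $d^p_{GH}\le\tfrac12$ is actually consumed. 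Splitting the distortion of $i_{13}$ through $X_2$, for $p,q\in B^{X_1}(x_1,1/\varepsilon_{13})$,
\begin{align*}
\bigl|d^{X_3}(i_{13}(p),i_{13}(q))-d^{X_1}(p,q)\bigr|
&\le\bigl|d^{X_3}(i_{23}i_{12}(p),i_{23}i_{12}(q))-d^{X_2}(i_{12}(p),i_{12}(q))\bigr|\\
&\qquad+\bigl|d^{X_2}(i_{12}(p),i_{12}(q))-d^{X_1}(p,q)\bigr|<\varepsilon_{23}+\varepsilon_{12}<\varepsilon_{13},
\end{align*}
using $\operatorname{dis}(i_{23})|_{B^{X_2}(x_2,1/\varepsilon_{23})}<\varepsilon_{23}$, which is condition~2.

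Condition~3 is the crux, and it is a chase through nested balls in which the radii must be tracked carefully. Given $z\in B^{X_3}(x_3,1/\varepsilon_{13}-\varepsilon_{13})$ (the condition is vacuous when $\varepsilon_{13}\ge1$, the ball being empty), I use that $t\mapsto1/t-t$ is decreasing and $\varepsilon_{13}\ge\varepsilon_{23}$ to place $z$ in $B^{X_3}(x_3,1/\varepsilon_{23}-\varepsilon_{23})$, so condition~3 for $i_{23}$ produces $w\in B^{X_2}(x_2,1/\varepsilon_{23})$ with $d^{X_3}(i_{23}(w),z)<\varepsilon_{23}$; the distortion of $i_{23}$ sharpens this to $d^{X_2}(w,x_2)<1/\varepsilon_{13}-2\varepsilon_{12}\le1/\varepsilon_{12}-\varepsilon_{12}$, and then condition~3 for $i_{12}$ gives $p\in B^{X_1}(x_1,1/\varepsilon_{12})$ with $d^{X_2}(i_{12}(p),w)<\varepsilon_{12}$; the distortion of $i_{12}$ sharpens this to $d^{X_1}(p,x_1)<1/\varepsilon_{13}$ --- this is where the factor $2$ in $\varepsilon_{13}$ is exactly used up --- so $p\in B^{X_1}(x_1,1/\varepsilon_{13})$. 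Since $i_{12}(p),w\in B^{X_2}(x_2,1/\varepsilon_{23})$, the distortion of $i_{23}$ now yields
$$
d^{X_3}(i_{13}(p),z)\le d^{X_3}\bigl(i_{23}i_{12}(p),i_{23}(w)\bigr)+d^{X_3}(i_{23}(w),z)<(\varepsilon_{12}+\varepsilon_{23})+\varepsilon_{23}<\varepsilon_{13},
$$
so $z\in N_{\varepsilon_{13}}\bigl(i_{13}(B^{X_1}(x_1,1/\varepsilon_{13}))\bigr)$.

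I expect the main obstacle to be precisely this condition~3 bookkeeping: one must keep the pulled-back point $p$ inside $B^{X_1}(x_1,1/\varepsilon_{13})$, keep every intermediate point inside a ball of radius $1/\varepsilon$ on which the relevant distortion is $<\varepsilon$, and keep all auxiliary radii inside the ranges $(0,1/\varepsilon)$ where conditions~2--3 of the definition of $\operatorname{Appr}_{\varepsilon}$ apply. The factor~$2$ in $\varepsilon_{13}=2(\varepsilon_{12}+\varepsilon_{23})$ and the standing bound $d^p_{GH}\le\tfrac12$ are tuned exactly so that all these constraints hold simultaneously --- the bound entering only through the inequality $1/\varepsilon_{13}+\varepsilon_{12}\le1/\varepsilon_{23}$, and the doubling being what makes the final radius estimate $d^{X_1}(p,x_1)<1/\varepsilon_{13}$ close up.
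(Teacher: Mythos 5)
Your proof is correct and follows essentially the same route as the paper's: compose the approximations, set $\varepsilon_{13}=2(\varepsilon_{12}+\varepsilon_{23})$, and verify the three defining conditions with the same ball-radius bookkeeping (including the key estimates $1/\varepsilon_{13}+\varepsilon_{12}\le 1/\varepsilon_{23}$ and $d^{X_1}(p,x_1)<1/\varepsilon_{13}$). The only differences are cosmetic: you handle the infimum via $\delta\to 0$ rather than fixing $\varepsilon_{mn}<\tfrac12$ outright, and you correctly route the almost-surjectivity step through $B^{X_3}(x_3,1/\varepsilon_{23}-\varepsilon_{23})$, a point the paper states slightly loosely.
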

\begin{proof}

 Let $i_{mn}\in App_{\varepsilon_{mn}}\left((X_{m}, x_{m}),(X_{n}, x_{n})\right)$, where $(m,n)$ equal to $(1,2)$, or $(2,3)$. We use the notation $B^{m}$ for $B^{X_m}$, and $d^{m}$ for $d^{X_m}$, and take $\varepsilon_{mn}<\frac12$.
Then we have that $i_{mn}(x_m)=x_n$, 
$$
dis(i_{mn})|_{B^m(x_m,\varepsilon_{mn}^{-1})}<\varepsilon_{mn},
\quad
B^n(x_n,\varepsilon_{mn}^{-1}-\varepsilon_{mn})\subset N_{\varepsilon_{mn}}\left(
i_{mn}(B^n(x_n,\varepsilon_{mn}^{-1}))\right).
$$
We call $i_{13} = i_{23} \circ i_{12}$, and let 
 $\varepsilon_{13}=2\left(\varepsilon_{12}+\varepsilon_{23}\right)$, we want to show that
$i_{13}\in App_{\varepsilon_{13}}\left((X_{1}, x_{1}),(X_{3}, x_{3})\right)$.

Observe that $\varepsilon_{mn}<2\varepsilon_{mn}< \varepsilon_{13}< 1$, 

Let $p\in B^{1}\left(x_{1}, \varepsilon_{13}^{-1}\right) \subset B^{1}\left(x_{1}, \varepsilon_{12}^{-1}\right)$, and since 
$dis(i_{12})|_{B^1(x_1,\varepsilon_{12}^{-1})}<\varepsilon_{12}$ and $i_{12}(x_1)=x_2$
we have that
\begin{align*}
 |d^1(p,x_1)-d^2(i_{12}(p),x_2)|\le \varepsilon_{12},
\end{align*}
then
\begin{align*}
 d^2(i_{12}(p),x_2)&\le d^1(p,x_1)+\varepsilon_{12}\\
& \le \varepsilon_{13}^{-1}+\varepsilon_{12}\\
& \le \frac{1+2\varepsilon_{12}(\varepsilon_{12}+\varepsilon_{23})}{2(\varepsilon_{12}+\varepsilon_{23})} \le \frac{1}{(\varepsilon_{12}+\varepsilon_{23})}\\
& \le \frac{1}{\varepsilon_{23}}.
\end{align*}
Note that we use the fact that  $\varepsilon_{mn}<1/2$.

Now if $p_1,p_2\in B^{1}\left(x_{1}, \varepsilon_{13}^{-1}\right)$, 
$i_{12}(p_1),i_{12}(p_2)\in B^{2}\left(x_{2}, \varepsilon_{23}^{-1}\right)$, then
\begin{align*}
 |&d^1(p_1,p_2)-d^3(i_{13}(p_1),i_{13}(p_2))|\\&
 \le|d^1(p_1,p_2)-d^2(i_{12}(p_1),i_{12}(p_2))|+
 |d^2(i_{12}(p_1),i_{12}(p_2))-d^3(i_{13}(p_1),i_{13}(p_2))|\\
& \le \varepsilon_{12}+\varepsilon_{23}<\varepsilon_{13}.
\end{align*}
Then $dis(i_{13})|_{B^1(x_1,\varepsilon_{13}^{-1})}<\varepsilon_{13}$.

For the third part,  let $p_3\in B^{3}\left(x_{3}, \varepsilon_{13}^{-1}-\varepsilon_{13}\right)\subset B^{3}\left(x_{3}, \varepsilon_{23}^{-1}\right)\subset N_{\varepsilon_{23}}\left(
i_{23}B^2(x_2,\varepsilon_{23}^{-1})\right)$, 
then there exist $p_2\in B^2(x_2,\varepsilon_{23}^{-1})$, such that $d^3(i_{23}(p_2),p_3)<\varepsilon_{23}$, and using the distortion of $i_{23}$
$$ 
\begin{aligned}
d^{2}\left(p_{2}, x_{2}\right) & \leq d^{3}(i_{23}(p_{2}), x_{3})+\varepsilon_{23} \\
& \leq d^{3}(i_{23}(p_{2}), p_{3}) +d^{3}(p_{3},x_3)+\varepsilon_{23}\\
& \leq \varepsilon_{13}^{-1}-\varepsilon_{13}+2\varepsilon_{23}
=\varepsilon_{13}^{-1}-2\varepsilon_{12}\\
& \leq \varepsilon_{12}^{-1}-\varepsilon_{12},
\end{aligned}
$$
that is, $p_{2} \in B^{2}\left(x_{2}, \varepsilon_{12}^{-1}-\varepsilon_{12}\right)
\subset N_{\varepsilon_{12}}\left(
i_{12}B^1(x_1,\varepsilon_{12}^{-1})\right)$, then exists $p_{1} \in B_{1}\left(x_{1},  \varepsilon_{12}^{-1}\right)$ such that $d^{2}\left(i_{12}\left(p_{1}\right), p_{2}\right)<\varepsilon_{12},$
and again by the dilation of $i_{12}$
$$
\begin{aligned}
d^{1}\left(p_{1}, x_{1}\right) & \leq d^{2}(i_{12}(p_{1}), p_{2})+d^{2}(p_{2}, x_{2})+\varepsilon_{12} \\
&<\varepsilon_{12}+\varepsilon_{13}^{-1}-\varepsilon_{13}+2\varepsilon_{23}+\varepsilon_{12}\\
&= \varepsilon_{13}^{-1},
\end{aligned}
$$
that is $p_1\in B^1(x_1,\varepsilon_{13}^{-1})$. 
Now using the distortion of $i_{23}$ we have
$$
\begin{aligned}
d^{3}\left(i_{13}\left(p_{1}\right), p_{3}\right) & \leq d^{3}\left(i_{13}\left(p_{1}\right), i_{23}\left(p_{2}\right)\right)+d^{3}\left(i_{23}\left(p_{2}\right), p_{3}\right) \\
& = d^{3}\left(i_{23}(i_{12}(p_{1})), i_{23}(p_{2})\right)+d^{3}\left(i_{23}(p_{2}), p_{3}\right)\\
& \leq d^{2}\left(i_{12}(p_{1}), p_{2}\right)+\varepsilon_{23}+d^{3}\left(i_{23}(p_{2}), p_{3}\right) \\
& \leq \varepsilon_{12}+2 \varepsilon_{23} \\
&<\varepsilon_{13}.
\end{aligned}
$$
That is $B^3(x_3,\varepsilon_{13}^{-1}-\varepsilon_{13})\subset N_{\varepsilon_{13}}(i_{13}(B^1(x_1,\varepsilon_{13}^{-1})))$.

Making $(m,n)$ equal to $(2,1)$, or $(3,2)$, and 
interchanging  $1$ by $3$, we get that  the same result for $i_{31}$, and the proof is complete.
\end{proof}

\end{document}